\documentclass[12pt]{amsart}
\usepackage[margin=1in]{geometry}
\usepackage{amssymb, mathtools}
\usepackage[T1]{fontenc}
\usepackage{lmodern, microtype}
\usepackage[hidelinks]{hyperref}
\newtheorem{theorem}{Theorem}[section]
\newtheorem{lemma}[theorem]{Lemma}
\newtheorem{corollary}[theorem]{Corollary}
\newtheorem{proposition}[theorem]{Proposition}
\theoremstyle{definition}
\newtheorem{question}[theorem]{Question}
\theoremstyle{remark}
\newtheorem{remark}[theorem]{Remark}
\newtheorem{example}[theorem]{Example}

\numberwithin{equation}{section}

\DeclareMathOperator{\Lip}{Lip}
\DeclareMathOperator{\dist}{dist}
\DeclareMathOperator{\sign}{sign}
\newcommand{\R}{\mathbb{R}}
\newcommand{\pos}[1]{(#1)^{+}}
\newcommand{\Q}{\mathbf{Q}}
\newcommand{\G}{\mathcal{G}}
\newcommand{\lb}{[\![}
\newcommand{\rb}{]\!]}
\newcommand{\Vt}{\widetilde{V}}
\newcommand{\dt}{\widetilde{d}}
\newcommand{\T}{\mathcal T}
\newcommand{\Y}{\mathcal Y}
\newcommand{\one}{\mathbf 1}

\begin{document}

\title[Symmetric products and finite subset spaces of ALR]
{Symmetric products and finite subset spaces\\of absolute Lipschitz retracts}

\author{Leonid V. Kovalev}
\address{215 Carnegie, Department of Mathematics, Syracuse University,
Syracuse, NY 13244, USA}
\email{lvkovale@syr.edu}

\subjclass[2020]{Primary 51F30; Secondary 46B20, 54B20, 54C15}
\keywords{Metric space, symmetric product, absolute Lipschitz retract,
multiple-valued map, finite subset space}
\hypersetup{
  pdftitle={Symmetric products and finite subset spaces of absolute Lipschitz retracts},
  pdfauthor={Leonid V. Kovalev},
  pdfkeywords={metric space, symmetric product, absolute Lipschitz retract,
  multiple-valued map, finite subset space}
}

\begin{abstract}
We prove that the $Q$th symmetric product of an absolute
$\lambda$-Lipschitz retract, equipped with the bottleneck matching metric,
is an absolute $(3Q-2)\lambda$-Lipschitz retract. This answers a question
of De Pauw about the target spaces of Almgren's $Q$-valued maps. We also
prove that the space of nonempty subsets with at most $n$ points, equipped
with the Hausdorff metric, is an absolute $(6n-5)\lambda$-Lipschitz retract. The main
tool is a reconstruction theorem for bounded products of metric stars with
finitely many rays, modulo a finite permutation group: these quotients are absolute
$3$-Lipschitz retracts, independently of the group and the number of
coordinates. A variant of this theorem, in which unordered sets of sign
vectors take the place of the group action, handles finite subset spaces.
\end{abstract}

\maketitle

\section{Introduction}

Let $X$ be a metric space and let $Q\ge1$ be an integer. The symmetric group
$S_Q$ acts on the Cartesian power $X^Q$ by permuting the coordinates. The
quotient $\Q_Q(X)=X^Q/S_Q$ consists of unordered $Q$-tuples
$\lb x_1,\dots,x_Q\rb$ of points of $X$, in which repetitions are allowed. We
equip it with the metric
\begin{equation}\label{eq:metric}
\G\bigl(\lb x_1,\dots,x_Q\rb,\lb y_1,\dots,y_Q\rb\bigr)
=\min_{\sigma\in S_Q}\max_{1\le i\le Q} d(x_i,y_{\sigma(i)}).
\end{equation}
The space $\Q_Q(X)$ is the $Q$th \emph{symmetric product} of $X$. When
$X=\R^n$, the space $\Q_Q(\R^n)$ is the target of Almgren's multiple-valued
functions~\cite{Almgren,DeLellisSpadaro}; the notation $\Q_Q(X)$
follows~\cite{BouafiaDePauwGoblet,Goblet}.

A map $f\colon X\to Y$ between metric spaces is $L$-Lipschitz if
$d_Y(f(a),f(b))\le L\,d_X(a,b)$ for all $a,b\in X$; the least such $L$ is
denoted $\Lip(f)$. A metric space $Z$ is an \emph{absolute $\lambda$-Lipschitz
retract} if every metric space $M$ containing $Z$ admits a $\lambda$-Lipschitz
retraction $r\colon M\to Z$, that is, a $\lambda$-Lipschitz map such that
$r(z)=z$ for all $z\in Z$. If this holds for some $\lambda<\infty$, then $Z$ is
an \emph{absolute Lipschitz retract}.

T.~De Pauw asked whether $\Q_Q(X)$ is an absolute Lipschitz retract whenever
$X$ is~\cite[Problem 1.4]{AIMPL}. The same question, including the specific
cases $X=\ell_\infty$ and $X=C[0,1]$, appears in~\cite[Question
3.3.9]{BouafiaDePauwGoblet}. Our main result answers the question of De Pauw
in the affirmative.

\begin{theorem}\label{thm:main}
If $X$ is an absolute $\lambda$-Lipschitz retract, then $\Q_Q(X)$ is an
absolute $(3Q-2)\lambda$-Lipschitz retract.
\end{theorem}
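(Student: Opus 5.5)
The strategy follows the reconstruction theorem announced in the abstract: bounded products of metric stars with finitely many rays, taken modulo a finite permutation group, are absolute $3$-Lipschitz retracts. The task is to reduce $\Q_Q(X)$ to such quotients. I would work with a retract characterization. It suffices to show that for every finite (or separable) metric space $M$, every subset $A\subset M$, and every $L$-Lipschitz map $f\colon A\to \Q_Q(X)$, there is an extension $F\colon M\to\Q_Q(X)$ with $\Lip(F)\le (3Q-2)\lambda L$. Applying this with $M\supset \Q_Q(X)$ and $f=\mathrm{id}$ gives the retraction. Passing from finite $M$ to arbitrary $M$ would use the standard compactness/ultralimit argument for $\lambda$-Lipschitz extension properties. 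For this I need $\Q_Q(X)$ to be complete and to behave well under limits, which I would take from the fact that $X$, as an ALR, is complete.

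The reduction is a finite-dimensional encoding. Given finite $A$ and $f$, the finitely many points occurring in the tuples $f(a)$, $a\in A$, span a finite set $P\subset X$. I would embed $X$ isometrically into $\ell_\infty(P)$ (or $\ell_\infty$) via Kuratowski–Fréchet, and let $\rho\colon\ell_\infty\to X$ be a $\lambda$-Lipschitz retraction. Since $\rho$ acts coordinatewise on unordered tuples, it induces a $\lambda$-Lipschitz retraction $\Q_Q(\ell_\infty)\to\Q_Q(X)$. This reduces the problem to extending maps into $\Q_Q(\ell_\infty^N)$ with constant $3Q-2$. The extra factor $\lambda$ enters only at this step.

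To get the constant $3Q-2$ for $\Q_Q(\ell_\infty^N)$, I would decompose a $Q$-tuple by clustering. I would first deal with the merging of points. A tuple in $\Q_Q(\R)$ is determined by its sorted coordinates, but in $\ell_\infty^N$ the coordinatewise sorting does not respect matchings. So I would instead use the star picture: rays emanating from a center correspond to the directions in which the points of a tuple separate. Each tuple, relative to its barycenter-type center $c$ (computed coordinatewise, e.g. the midrange, which is $1$-Lipschitz for $\G$), is recorded as a $Q$-tuple of displacement vectors. The combinatorial pattern of which points coincide or separate is encoded by a star with finitely many rays, and the permutation group $S_Q$ acts on it. The star-product theorem then supplies a $3$-Lipschitz extension of the "shape" part. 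The cost of recombining the center with the shape, through a chain of up to $Q-1$ merges (a telescoping estimate over the $Q-1$ successive gaps in a single-linkage clustering), should give $3+3(Q-1)-\dots$. More precisely, I would arrange the count to come out to $1+3(Q-1)=3Q-2$: the center costs $1$ and each of the $Q-1$ merge levels costs $3$.

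The main obstacle is this last step: building a bi-Lipschitz (ideally isometric on the relevant pieces) model of $\Q_Q(\ell_\infty^N)$ inside a bounded product of stars modulo $S_Q$. This model must let the extension be pulled back with no additional loss beyond the $3(Q-1)$ from the clustering levels. I would first try a bounded truncation: rescale so that $\mathrm{diam}\, f(A)$ is bounded, and treat each $\ell_\infty$ coordinate as a star with two rays. Products of two-ray stars over $N$ coordinates, each taken $Q$ times, modulo the diagonal $S_Q$-action, then contain $\Q_Q(\ell_\infty^N)$ isometrically. This could let me apply the reconstruction theorem directly with constant $3$, and the delicate part becomes checking that the quotient metric there coincides with $\G$.
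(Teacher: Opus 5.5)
There is a genuine gap. Your outer frame matches the paper: embed in $\ell_\infty$, push a $\lambda$-Lipschitz retraction forward to $\Q_Q$, pay $1$ for the coordinatewise midrange and $3$ for each of $Q-1$ pieces. But the core step is never constructed. That step is an encoding of $\Q_Q(\ell_\infty)$ into a quotient $\Y_G(\Gamma)$ to which Theorem~\ref{thm:groups} applies, together with a Lipschitz decoder.

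The concrete version in your last paragraph misreads that theorem. There $G$ permutes the \emph{rays} of the star, by one common permutation at every coordinate; it never permutes the index set $\Gamma$. With two-ray stars the only possible nontrivial $G$ is $\{\pm1\}$ acting by $x\mapsto-x$, which yields the antipodal quotient $\Vt$, not a symmetric product. The space $(\ell_\infty^N)^Q/S_Q$, with $S_Q$ permuting the $Q$ blocks, is just $\Q_Q(\ell_\infty^N)$ again. So ``checking that the quotient metric coincides with $\G$'' is a tautology, and the reconstruction theorem says nothing about this space. Were it applicable, it would give a constant $3$ independent of $Q$, which the paper leaves as an open question.

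The single-linkage clustering alternative is not developed either. Cluster hierarchies jump under small perturbations, and you give neither a $1$-Lipschitz shape map nor a decoder. So the count $1+3(Q-1)$ is a target rather than an estimate.

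The missing idea is to keep coordinatewise sorting rather than abandon it. Order statistics are $1$-Lipschitz for $\G$, and the cross-coordinate matching they lose can be restored by colors.
\begin{itemize}
\item \emph{Encoding.} For $i\in I$ and $1\le j\le Q-1$, the paper records the half-gap $a_{ij}=\bigl(x_{(j+1)}(i)-x_{(j)}(i)\bigr)/2$ as a point of the star whose rays are the nonempty proper subsets of $\{1,\dots,Q\}$. Its color is the set $U_{ij}$ of labels below the gap. Relabeling acts on all colors by one element of $S_Q$, which gives a map $T\colon\Q_Q(\ell_\infty(I))\to\Y_{S_Q}(I\times\{1,\dots,Q-1\})$.
\item \emph{Why $T$ is $1$-Lipschitz.} If $U_{ij}\ne U'_{ij}$, choose $k\in U_{ij}\setminus U'_{ij}$ and $l\in U'_{ij}\setminus U_{ij}$. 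Then $a_{ij}+a'_{ij}\le\frac12\bigl(x_l(i)-y_l(i)+y_k(i)-x_k(i)\bigr)\le\delta$.
\item \emph{Decoder.} Set $(y_1(i),\dots,y_Q(i))=v_i\one+\sum_{j=1}^{Q-1}\eta(t_{ij})$ with $\eta(a,U)=a(\one-2\one_U)$. Each term is $S_Q$-equivariant and $1$-Lipschitz, so the decoder is $1$-Lipschitz in $v$ and $(Q-1)$-Lipschitz in $[t]$.
\end{itemize}
Composing with the $3$-Lipschitz reconstruction gives $1+3(Q-1)=3Q-2$.

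Separately, your reduction to finite $M$ is unnecessary and, as stated, unjustified. Passing from finite to arbitrary $M$ by compactness needs something like properness of the target, and completeness of $\Q_Q(X)$ does not provide that. The paper simply embeds $X$ in $\ell_\infty(I)$ for an arbitrary index set and proves the $\ell_\infty(I)$ case directly via Lemma~\ref{lem:transfer}.
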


Our metric~\eqref{eq:metric} is the bottleneck matching metric, denoted by
$\G_\infty$ in~\cite{BouafiaDePauwGoblet}; it is also the metric used by
Goblet~\cite{Goblet}. The root-sum-of-squares matching metric $\G_2$
satisfies $\G_\infty\le\G_2\le\sqrt Q\,\G_\infty$. Thus the absolute
Lipschitz retract property is the same for these two metrics, but all
numerical constants in this paper refer to $\G=\G_\infty$.

The space $\ell_\infty(I)$ of bounded real functions on a set $I$ is an
absolute $1$-Lipschitz retract, and so is every hyperconvex metric
space~\cite{AronszajnPanitchpakdi}. Every
absolute Lipschitz retract is a Lipschitz retract of $\ell_\infty(I)$ for some
$I$~\cite[Proposition 1.2]{BenyaminiLindenstrauss}. The spaces $c_0$ and
$C(K)$, with $K$ a compact metric space, are absolute Lipschitz
retracts as well~\cite{Lindenstrauss}; for $C(K)$, Kalton~\cite[Theorem
3.5]{Kalton} proved the bound $2$. Thus Theorem~\ref{thm:main} shows that
$\Q_Q(\ell_\infty(I))$ is an absolute $(3Q-2)$-Lipschitz retract for every
index set $I$ (compare~\cite[Question 2.4.4]{BouafiaDePauwGoblet}), and
that $\Q_Q(C(K))$ is an absolute $(6Q-4)$-Lipschitz retract.

For finite-dimensional targets, Almgren's bi-Lipschitz
embedding of $\Q_Q(\R^n)$ onto a Lipschitz retract of a Euclidean
space~\cite{Almgren}, \cite[Theorem 2.1]{DeLellisSpadaro} implies that
$\Q_Q(\R^n)$ is an absolute Lipschitz retract~\cite[Corollary
3.3.8]{BouafiaDePauwGoblet}. More generally, Goblet~\cite[Theorem 1.2]{Goblet}
used the extension theorems of Lang and Schlichenmaier~\cite{LangSchlichenmaier}
to prove that $\Q_Q(Y)$ is an absolute Lipschitz retract whenever $Y$ is a
complete weakly convex geodesic space of finite Nagata dimension, which
includes every finite-dimensional normed space. These results do not
provide dimension-independent constants.

A related sequence of spaces is formed by the finite subset spaces $X(n)$,
introduced by Borsuk and Ulam~\cite{BorsukUlam} under the name of
symmetric products; we follow Tuffley~\cite{Tuffley} in calling them finite
subset spaces. The elements of $X(n)$ are the nonempty subsets of $X$ of
cardinality at most $n$, with the Hausdorff distance
\[
d_H(A,B)=\max\Bigl(\max_{a\in A}\dist(a,B),\ \max_{b\in B}\dist(b,A)\Bigr).
\]
The question whether $X(n)$ is an absolute Lipschitz retract whenever $X$ is
was raised in~\cite{Kovalev2015,Kovalev2016} and further discussed
in~\cite{AkoforKovalev}. It is known that $\R(n)$ is an absolute Lipschitz
retract~\cite{Kovalev2015}, and so is $\R^d(n)$~\cite[Corollary
1.2]{Kovalev2016}. The map $\lb x,y\rb\mapsto\{x,y\}$ is an isometry of
$\Q_2(X)$ onto $X(2)$ (Lemma~\ref{lem:matching}), so Theorem~\ref{thm:main}
gives the following.

\begin{corollary}\label{cor:X2}
If $X$ is an absolute $\lambda$-Lipschitz retract, then $X(2)$ is an
absolute $4\lambda$-Lipschitz retract.
\end{corollary}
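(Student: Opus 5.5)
The corollary follows from Theorem~\ref{thm:main} with $Q=2$, combined with the isometry between $\Q_2(X)$ and $X(2)$. The plan is to check the isometry directly and then note that the absolute Lipschitz retract property, with its constant, is invariant under isometries.

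First, the map $\Phi\colon\Q_2(X)\to X(2)$, $\lb x,y\rb\mapsto\{x,y\}$, is well defined and surjective. It is well defined because the unordered pair determines the set. It is surjective because a one-point set $\{x\}$ is the image of $\lb x,x\rb$.

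Next, $\Phi$ preserves distances. Write $A=\{x_1,x_2\}$ and $B=\{y_1,y_2\}$. After relabeling, assume $\G(\lb x_1,x_2\rb,\lb y_1,y_2\rb)=\max(d(x_1,y_1),d(x_2,y_2))$.
\begin{itemize}
\item \emph{Upper bound.} For each $i$ we have $\dist(x_i,B)\le d(x_i,y_i)$ and $\dist(y_i,A)\le d(x_i,y_i)$. Hence $d_H\le\G$.
\item \emph{Lower bound.} Let $h=d_H(A,B)$. Each $x_i$ lies within $h$ of some $y_j$, and each $y_j$ lies within $h$ of some $x_i$. If $x_1$ and $x_2$ can be matched to distinct $y$'s within distance $h$, then $\G\le h$. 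Otherwise both $x_i$ are within $h$ only of the same point, say $y_1$, and $d(x_i,y_2)>h$ for $i=1,2$. But $y_2$ must be within $h$ of some $x_i$, which is a contradiction.
\end{itemize}
The cases where $A$ or $B$ is a singleton are covered by the same argument, since repeated entries are allowed. This combinatorial matching step is the only part that needs any care, and it is elementary. This is the content the paper attributes to Lemma~\ref{lem:matching}.

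Finally, suppose $M\supseteq X(2)$ is a metric space. Glue along $\Phi$: identify $X(2)$ with $\Q_2(X)$ through the isometry and view $M$ as containing $\Q_2(X)$. By Theorem~\ref{thm:main} with $Q=2$, there is a retraction $r\colon M\to\Q_2(X)$ with $\Lip(r)\le(3\cdot2-2)\lambda=4\lambda$. Then $\Phi\circ r$ is a $4\lambda$-Lipschitz retraction of $M$ onto $X(2)$, so $X(2)$ is an absolute $4\lambda$-Lipschitz retract.
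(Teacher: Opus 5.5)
Your proposal is correct and follows the paper's route. Theorem~\ref{thm:main} with $Q=2$ gives the constant $(3\cdot2-2)\lambda=4\lambda$ for $\Q_2(X)$, and the isometry $\lb x,y\rb\mapsto\{x,y\}$ onto $X(2)$ (your matching argument is essentially the paper's proof of Lemma~\ref{lem:matching}) transfers this constant to $X(2)$.
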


For $n\ge3$ the spaces $\Q_n(X)$ and $X(n)$ are different, since only the
former keeps track of multiplicities, and Theorem~\ref{thm:main} does not
yield $X(n)$ by a direct reduction (Section~\ref{sec:finite}). Nevertheless,
a variant of its proof answers the question for all $n$.

\begin{theorem}\label{thm:finite}
If $X$ is an absolute $\lambda$-Lipschitz retract, then $X(n)$ is an
absolute $(6n-5)\lambda$-Lipschitz retract.
\end{theorem}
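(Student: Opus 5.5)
We reduce to the case $X=\ell_\infty(I)$ and the Hausdorff metric, and then carry out the variant of the reconstruction theorem described in the abstract, with unordered sets of sign vectors in place of a permutation group.

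\emph{Reduction.} By \cite[Proposition 1.2]{BenyaminiLindenstrauss} we may regard $X$ as a subset of $\ell_\infty(I)$ admitting a $\lambda$-Lipschitz retraction $\rho\colon\ell_\infty(I)\to X$. The induced map $A\mapsto\rho(A)$ sends $\ell_\infty(I)(n)$ onto $X(n)$: images of sets of at most $n$ points again have at most $n$ points. It is $\lambda$-Lipschitz for $d_H$, since $\dist(\rho(a),\rho(B))\le\lambda\dist(a,B)$, and it fixes $X(n)$. So it suffices to prove that $\ell_\infty(I)(n)$ is an absolute $(6n-5)$-Lipschitz retract. Given a metric space $M\supset \ell_\infty(I)(n)$, we must extend the identity map of $\ell_\infty(I)(n)$ to a $(6n-5)$-Lipschitz map $M\to\ell_\infty(I)(n)$.

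\emph{Local model.} Fix a set $A\in\ell_\infty(I)(n)$ with points $a_1,\dots,a_k$. Near $A$, a nearby set $B$ is described coordinatewise. For each $i\in I$, write the values $b(i)-a_j(i)$, clipped at a scale comparable to the separation of the $a_j$, as a point of a bounded product of metric stars. The unordered structure of $B$ (which points of $B$ lie near which $a_j$, and with which orderings in each coordinate) is recorded by an unordered set of sign vectors. The point is that the Hausdorff metric near $A$ coincides with the sup over coordinates of the distance in this quotient of a product of stars by ``sets of sign vectors''.

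\emph{Main step.} Prove the analogue of the reconstruction theorem: such quotients are absolute $3$-Lipschitz retracts, independently of $I$ and of the number of points. I would mimic the group-quotient proof. First extend each coordinate separately using the $1$-Lipschitz extension for stars; a star is a tree, hence hyperconvex. Then symmetrize over the admissible sign configurations, choosing a configuration by a McShane-type minimal formula so that the choice is $1$-Lipschitz. The loss of a factor $3$ comes from composing the extension with this selection. I expect this to be the hardest step. The danger is that Hausdorff distance allows a point of $B$ to be matched to several points of $A$, so the sign data is not a group action, and one must verify that the minimizing configuration changes Lipschitz-continuously.

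\emph{Gluing.} Cover $\ell_\infty(I)(n)$ by these local charts, organized by an induction on the separation scale. The natural strategy is induction on $n$, where $X(1)=X$ is the base case. For a point of $M$, decide whether the nearest subsets are ``well separated at scale $r$'', in which case the local model retraction is used, or ``clustered'', in which case we collapse clusters and use the $X(n-1)$ retraction. These two regimes are then interpolated with a Lipschitz partition depending on the distance to the clustered part. Tracking constants, each inductive step should add $6$: a factor $3$ from the reconstruction theorem, doubled by the Hausdorff two-sided matching, compared with the $3$ per step added in Theorem~\ref{thm:main}. This gives the bound $6n-5$, consistent with $1$ at $n=1$. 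The constant bookkeeping in this gluing is delicate but routine once the main step holds.
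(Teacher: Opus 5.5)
Your reduction to $X=\ell_\infty(I)$ is correct and is the one the paper uses, but the main step and the gluing step both have genuine gaps. In the main step, extending coordinatewise into the hyperconvex product of stars presupposes that the given map into the quotient lifts to the product with a controlled constant, and it need not. For the antipodal quotient $\Vt$, the loop $\theta\mapsto[(\cos(\theta/2),\sin(\theta/2))]$, $0\le\theta\le2\pi$, is Lipschitz but has no continuous lift. Already Biermann's points $[(0,3)],[(2,1)],[(-2,1)]$, pairwise at distance $2$, have some pair of representatives at distance $4$ under every choice. For the same reason, no minimal formula can select a discrete sign configuration Lipschitz-continuously, because the selection must jump somewhere.

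The missing idea is to \emph{encode} instead of lift, and to make the sign data irrelevant exactly where it would jump. In Theorem~\ref{thm:boxes}, a set $A$ of box vertices is sent $1$-Lipschitz into $\ell_\infty(\Gamma\sqcup\mathcal P)$ by its amplitudes $a_\gamma$ and the signed minima $\sigma_p(U_A)\min_{\gamma\in H_p}a_\gamma$ over finite tests $p$. The reconstruction then subtracts a $2$-Lipschitz threshold $\tau(z)$, built from the defects $\min_{\gamma\in H_p}z_\gamma-|z_p|$ and from finite sets with inconsistent sign constraints. As a result, the sign data on the support of the truncated amplitudes is consistent, and a compactness argument produces the sign set. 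Moreover, for inputs at distance $\varepsilon$ the sign sets can differ only at coordinates with $a_\gamma+b_\gamma\le3\varepsilon$; this is the source of the $3$.

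The local charts, the induction on $n$, and the gluing are not routine bookkeeping; this is where local-to-global arguments break down for $\ell_\infty(I)$. The standard way to glue local extensions with controlled constants uses covers of bounded multiplicity (finite Nagata dimension, as in Goblet's use of Lang--Schlichenmaier), which $\ell_\infty(I)$ lacks. Also, $\ell_\infty(I)(n)$ is not linear, so a partition of unity does not let you interpolate between two candidate retractions. The ``$+6$ per step'' is fitted to the target constant rather than derived.

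The paper needs no charts and no induction, because merging points are handled globally. Let $c(A)$ be the coordinatewise midrange and $u_{A,x}(i,t)=\sign(x(i)-t)\,h_{A(i)}(t)$, where $h_{A(i)}$ is the tent function over the gaps of $A(i)$. Then $A\mapsto(c(A),\{u_{A,x}:x\in A\})$ is a $1$-Lipschitz map into $\ell_\infty(I)\times\mathcal B_n(I\times\R)$. The amplitudes vanish automatically as gaps close, and the sign set records which patterns occur, not how often. Points are recovered through Lemma~\ref{lem:profiles}(b), $|x-y|\le|c(A)-c(B)|+2(n-1)\|\phi_{A,x}-\phi_{B,y}\|$, with the decoder extended by McShane. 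A single application of the $3$-Lipschitz reconstruction then gives $1+3\cdot2(n-1)=6n-5$. The growth in $n$ comes from this point decoding, whose factor $2(n-1)$ is sharp by Remark~\ref{rem:sharp}, not from stacking inductive steps.
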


For $n=2$, Corollary~\ref{cor:X2} gives the better constant $4\lambda$.
As above, Theorem~\ref{thm:finite} shows that $\ell_\infty(I)(n)$ and
$C(K)(n)$ are absolute Lipschitz retracts with the constants $6n-5$ and
$12n-10$, respectively. For $\ell_\infty(I)$ with $I\ne\varnothing$, the
constant must grow at least linearly in $n$
(Proposition~\ref{prop:finite-lower}).

The main step is a reconstruction theorem for products of metric stars. Let
$\T_{\mathcal C}$ be the metric star whose rays are indexed by a finite set
$\mathcal C$, that is, copies of $[0,\infty)$ indexed by $\mathcal C$ and glued
at $0$. For a set $\Gamma$, let $\ell_\infty(\Gamma,\T_{\mathcal C})$ be the
space of bounded families of points of $\T_{\mathcal C}$ indexed by $\Gamma$,
and let a subgroup $G\le\operatorname{Sym}(\mathcal C)$ act on it by applying
the same permutation of the rays in every coordinate. Theorem~\ref{thm:groups}
shows that $\ell_\infty(\Gamma,\T_{\mathcal C})/G$ is an absolute $3$-Lipschitz
retract for every set $\Gamma$.

Theorem~\ref{thm:main} reduces to the case $X=\ell_\infty(I)$. For an unordered
$Q$-tuple $\lb x_1,\dots,x_Q\rb$ in $\ell_\infty(I)$ and each $i\in I$, consider
the gaps between consecutive order statistics of the real numbers
$x_1(i),\dots,x_Q(i)$. Each gap is recorded as a point of the star whose rays
are indexed by the nonempty proper subsets of $\{1,\dots,Q\}$: its amplitude is
half the gap, and its color is the set of labels below the gap. Relabeling the
$Q$-tuple permutes all colors by the same element of $S_Q$. Together with the coordinatewise midrange, these
gaps determine the $Q$-tuple. The constant $3Q-2=1+3(Q-1)$ consists of $1$ from
the midrange and $3$ from each of the $Q-1$ gaps.

Theorem~\ref{thm:finite} also reduces to $X=\ell_\infty(I)$, but the gaps
can no longer be indexed by order statistics, because points of a finite set
may merge. Instead, each point $x$ of a finite set $A\subset\ell_\infty(I)$
is described, in each coordinate $i$, by a function of $t\in\R$: between the
extreme values of $\{a(i):a\in A\}$ it is the distance from $t$ to this set,
taken with the sign of $x(i)-t$, and outside them it vanishes. For a fixed
$A$ these functions differ only in sign, so together they form a finite set
of vertices of a rectangular box centered at the origin.
Theorem~\ref{thm:boxes} shows that such sets form an absolute
$3$-Lipschitz retract; its reconstruction records which sign patterns occur
rather than how many times they occur. The constant
$6n-5=1+3\cdot2(n-1)$ consists of $1$ from the midrange, $3$ from the
reconstruction, and $2(n-1)$ from recovering a point from its profile; the
last factor is sharp (Remark~\ref{rem:sharp}).

Section~\ref{sec:prelim} contains preliminaries, Section~\ref{sec:colors}
proves the reconstruction theorem, Section~\ref{sec:allQ} proves
Theorem~\ref{thm:main}, and Section~\ref{sec:finite} proves
Theorem~\ref{thm:finite}. Section~\ref{sec:questions} discusses lower bounds
and open questions.

\section{Preliminaries}\label{sec:prelim}

All normed spaces in this article are over the real field.

We will use the following characterization of absolute Lipschitz
retracts~\cite[Proposition 1.2]{BenyaminiLindenstrauss}: a metric space $Z$ is
an absolute $\lambda$-Lipschitz retract if and only if for every metric space
$M$, every nonempty subset $A\subset M$, and every $L$-Lipschitz map $f\colon A\to Z$
there exists an extension $F\colon M\to Z$ of $f$ with $\Lip(F)\le\lambda L$.
For $Z=\ell_\infty(I)$, an extension with
$\lambda=1$ is obtained by applying the McShane formula
$F_i(p)=\inf_{a\in A}\bigl(f_i(a)+L\,d(p,a)\bigr)$ 
to each coordinate. Fixing $a_0\in A$, we have
\[
f_i(a_0)-L\,d(p,a_0)\le F_i(p)
\le f_i(a_0)+L\,d(p,a_0),
\]
so $(F_i(p))_{i\in I}$ belongs to $\ell_\infty(I)$.

\begin{lemma}\label{lem:transfer}
Let $Z$ and $W$ be metric spaces, and let $\Phi\colon Z\to W$ and
$\Psi\colon W\to Z$ be Lipschitz maps such that $\Psi\circ\Phi=\mathrm{id}_Z$.
If $W$ is an absolute $\mu$-Lipschitz retract, then $Z$ is an absolute
$\lambda$-Lipschitz retract with $\lambda=\mu\Lip(\Phi)\Lip(\Psi)$.
\end{lemma}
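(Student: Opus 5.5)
We will use the extension characterization of absolute Lipschitz retracts recalled above (from \cite[Proposition 1.2]{BenyaminiLindenstrauss}), and argue by composition; no embedding of $Z$ into $M$ has to be built by hand. Let $M$ be a metric space containing $Z$. We want a $\lambda$-Lipschitz retraction $r\colon M\to Z$ with $\lambda=\mu\Lip(\Phi)\Lip(\Psi)$. If $Z$ is empty the statement is vacuous, so we assume $Z\neq\varnothing$, and we set $L=\Lip(\Phi)$.

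\emph{Step 1.} View $Z$ as a nonempty subset of $M$ and consider the map $f=\Phi\colon Z\to W$, which is $L$-Lipschitz. Since $W$ is an absolute $\mu$-Lipschitz retract, the characterization gives an extension $F\colon M\to W$ with $F|_Z=\Phi$ and $\Lip(F)\le\mu L$.

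\emph{Step 2.} Define $r=\Psi\circ F\colon M\to Z$. Then
\[
\Lip(r)\le\Lip(\Psi)\Lip(F)\le\mu\Lip(\Phi)\Lip(\Psi).
\]

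\emph{Step 3.} For $z\in Z$ we have $r(z)=\Psi(\Phi(z))=z$, because $\Psi\circ\Phi=\mathrm{id}_Z$. So $r$ is the required retraction.

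There is no real obstacle here. The only point that needs care is using the extension form of the definition rather than the retraction form. Otherwise one would have to realize $Z$ inside some space containing $W$, which need not be available directly. The extension form avoids this: $M$ only has to contain $Z$, and $F$ is an arbitrary Lipschitz extension of $\Phi$ into $W$.

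One degenerate case remains. If $\Phi$ is constant, then $L=0$ and the characterization still yields a constant extension, so $F$ and $r$ are constant. This can only happen when $Z$ is a single point, and then the bound holds trivially.
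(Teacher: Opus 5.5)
Your argument is correct and is essentially the paper's proof: extend a map into $W$ to all of $M$ using the absolute retract property of $W$, then compose with $\Psi$. The only difference is that the paper extends $\Phi\circ f$ for an arbitrary Lipschitz $f\colon A\to Z$, which verifies the extension characterization for $Z$; your version is the special case $A=Z$, $f=\mathrm{id}_Z$, and it directly produces the retraction required by the definition.
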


\begin{proof}
Let $f\colon A\to Z$ be an $L$-Lipschitz map, where $A\subset M$. Extend
$\Phi\circ f$ to a map $F\colon M\to W$ with $\Lip(F)\le\mu\Lip(\Phi)L$. Then
$\Psi\circ F$ is an extension of $f$ with $\Lip(\Psi\circ F)\le\lambda L$.
\end{proof}

Note that $\Phi$ is not required to be an isometric embedding.

It is clear from~\eqref{eq:metric} that an $L$-Lipschitz map $g\colon X\to Y$
induces an $L$-Lipschitz map $\Q_Q(g)\colon\Q_Q(X)\to\Q_Q(Y)$, namely
$\lb x_1,\dots,x_Q\rb\mapsto\lb g(x_1),\dots,g(x_Q)\rb$. In particular, if $X$
is isometrically embedded in $Y$, then $\Q_Q(X)$ is isometrically embedded in
$\Q_Q(Y)$. Likewise, $A\mapsto g(A)$ is an $L$-Lipschitz map from $X(n)$ to
$Y(n)$ with respect to the Hausdorff distance.

\begin{lemma}\label{lem:matching}
For every metric space $X$, the map $\lb x,y\rb\mapsto\{x,y\}$ is an isometry
of $\Q_2(X)$ onto $X(2)$.
\end{lemma}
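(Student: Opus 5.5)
We need to prove that $\G(\lb x,y\rb,\lb u,v\rb)=d_H(\{x,y\},\{u,v\})$ for all $x,y,u,v\in X$, and that the map is onto $X(2)$. Surjectivity is immediate: a one-point set $\{x\}$ is the image of $\lb x,x\rb$, and a two-point set $\{x,y\}$ is the image of $\lb x,y\rb$. The map is well defined because $\{x,y\}$ does not depend on the order of $x$ and $y$. Once we have the metric identity, it follows that the map is an isometry; injectivity then follows automatically, since $\G$ is a metric.

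To prove the identity, write $a=d(x,u)$, $b=d(y,v)$, $c=d(x,v)$, $e=d(y,u)$. By~\eqref{eq:metric},
\[
\G=\min\bigl(\max(a,b),\max(c,e)\bigr).
\]
The Hausdorff distance is the maximum of four point-to-set distances:
\[
d_H=\max\bigl(\min(a,c),\min(e,b),\min(a,e),\min(c,b)\bigr),
\]
where the four terms are $\dist(x,B)$, $\dist(y,B)$, $\dist(u,A)$, $\dist(v,A)$ in that order.

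The inequality $d_H\le\G$ holds for any matching. If $\sigma$ is a bijection with all matched distances at most $r$, then every point of either set lies within $r$ of some point of the other set.

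The reverse inequality $\G\le d_H$ is the only step needing thought. Set $h=d_H$, and suppose for contradiction that both matchings fail. Then $\max(a,b)>h$ and $\max(c,e)>h$. We split into cases according to which entries exceed $h$.

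If $a>h$, then $\min(a,c)\le h$ forces $c\le h$, and $\min(a,e)\le h$ forces $e\le h$. This contradicts $\max(c,e)>h$.

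If instead $b>h$, the terms $\min(e,b)\le h$ and $\min(c,b)\le h$ give $e\le h$ and $c\le h$, the same contradiction.

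Since $\max(a,b)>h$, one of these two cases must occur, so $\G\le h$. This completes the identity.

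Degenerate configurations, such as $x=y$ or $u=v$, need no separate treatment. The formulas above remain valid with repeated points, because $\{x,x\}=\{x\}$ and the distances to this set are still the minima written above.
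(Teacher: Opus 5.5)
Your proof is correct and follows essentially the same route as the paper: $d_H\le\G$ is immediate, and for the reverse inequality you show that if one pair of the straight matching is farther apart than the Hausdorff distance, the two point-to-set constraints involving that pair force both crossed distances to be at most $d_H$. The paper instead first relabels so that $d(a_1,b_1)\le\rho$ and then treats the case $d(a_2,b_2)>\rho$, whereas you handle both entries symmetrically by contradiction; this difference is only cosmetic.
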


\begin{proof}
The map is a bijection, with repeated pairs corresponding to singletons.
Let $A=\lb a_1,a_2\rb$, $B=\lb b_1,b_2\rb$, and let $\rho$ be the
Hausdorff distance of their underlying sets. Clearly $\rho\le\G(A,B)$.
Relabel so that $d(a_1,b_1)\le\rho$. If $d(a_2,b_2)\le\rho$, this
matching proves the reverse inequality. Otherwise the Hausdorff bound
forces $d(a_2,b_1)\le\rho$ and $d(a_1,b_2)\le\rho$, so the crossed
matching does. No step uses $a_1\ne a_2$ or $b_1\ne b_2$.
\end{proof}

\begin{lemma}\label{lem:minima}
For two real families $(a_\gamma)_{\gamma\in F}$ and
$(b_\gamma)_{\gamma\in F}$ on a nonempty finite set $F$,
\[
\min_F a_\gamma+\min_F b_\gamma
\ge\min_F(a_\gamma+b_\gamma)
-\tfrac12\bigl(\max_F(a_\gamma-b_\gamma)-\min_F(a_\gamma-b_\gamma)\bigr).
\]
\end{lemma}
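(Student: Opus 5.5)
Set $c_\gamma=a_\gamma-b_\gamma$, $M=\max_F c_\gamma$, $m=\min_F c_\gamma$, and write $s_\gamma=a_\gamma+b_\gamma$. The plan is to express $a$ and $b$ through $s$ and $c$ as $a_\gamma=\tfrac12(s_\gamma+c_\gamma)$ and $b_\gamma=\tfrac12(s_\gamma-c_\gamma)$. Choose indices $\alpha,\beta\in F$ with $a_\alpha=\min_F a_\gamma$ and $b_\beta=\min_F b_\gamma$; these exist because $F$ is nonempty and finite. Then the left side becomes
\[
\min_F a_\gamma+\min_F b_\gamma=\tfrac12\bigl(s_\alpha+c_\alpha\bigr)+\tfrac12\bigl(s_\beta-c_\beta\bigr).
\]

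Next, bound the two sum terms below by $s_\alpha\ge\min_F s_\gamma$ and $s_\beta\ge\min_F s_\gamma$. This gives $\tfrac12(s_\alpha+s_\beta)\ge\min_F s_\gamma$. For the difference terms, use $c_\alpha\ge m$ and $-c_\beta\ge -M$, which give $\tfrac12(c_\alpha-c_\beta)\ge -\tfrac12(M-m)$. Adding the two bounds yields exactly the claimed inequality.

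The argument has no real obstacle. The only point that needs care is to pick the two minimizers $\alpha$ and $\beta$ independently, since they need not coincide. The decomposition into $s$ and $c$ is precisely what makes the spread of $a-b$ appear with the factor $\tfrac12$. Finiteness of $F$ is used only to guarantee that these minimizers exist, and infima would work equally well.
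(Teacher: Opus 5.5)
Your proof is correct and is essentially the paper's argument: the paper also writes $a_\gamma=\tfrac12\bigl((a_\gamma+b_\gamma)+(a_\gamma-b_\gamma)\bigr)$ and $b_\gamma=\tfrac12\bigl((a_\gamma+b_\gamma)-(a_\gamma-b_\gamma)\bigr)$, bounds each summand below by $\min_F(a_\gamma+b_\gamma)$, $\min_F(a_\gamma-b_\gamma)$, and $-\max_F(a_\gamma-b_\gamma)$ respectively, and adds. Your explicit choice of independent minimizers $\alpha,\beta$ just makes concrete the paper's step ``the minimum of a sum is at least the sum of the minima.''
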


\begin{proof}
Write $a_\gamma=((a_\gamma+b_\gamma)+(a_\gamma-b_\gamma))/2$ and use the
minimum of each summand as a lower bound. Do the same for $b_\gamma$,
using the negative of the maximum of $a_\gamma-b_\gamma$, and add.
\end{proof}

\section{Reconstruction under finite permutation groups}\label{sec:colors}

Let $\mathcal C$ be a nonempty finite set of colors, and let
$G$ be a subgroup of $\operatorname{Sym}(\mathcal C)$. The metric star $\T_{\mathcal C}$
has origin $0$ and points $(a,r)$ with $a>0$ and $r\in\mathcal C$; we call $a$
the \emph{amplitude} and $r$ the \emph{color} of $(a,r)$. All symbols $(0,r)$
represent the origin, which has amplitude $0$ and no color. For $a,b\ge0$ and
$r,s\in\mathcal C$, the metric is
\[
d_{\T}((a,r),(b,s))=
\begin{cases}|a-b|,&r=s,\\a+b,&r\ne s.\end{cases}
\]
The group $G$ acts on $\T_{\mathcal C}$ by isometries via $g(a,r)=(a,g(r))$.

For a set $\Gamma$, the \emph{bounded product} $\ell_\infty(\Gamma,\T_{\mathcal C})$
consists of the families $u=(u_\gamma)_{\gamma\in\Gamma}$ of points of
$\T_{\mathcal C}$ with bounded amplitudes, equipped with the metric
$\sup_{\gamma\in\Gamma}d_{\T}(u_\gamma,v_\gamma)$. The group $G$ acts on it by
the same permutation in every coordinate: $(gu)_\gamma=g(u_\gamma)$. Write
\[
\Y_G(\Gamma)=\ell_\infty(\Gamma,\T_{\mathcal C})/G,\qquad
D([u],[v])=\min_{g\in G}\sup_{\gamma\in \Gamma}d_{\T}(u_\gamma,g(v_\gamma)).
\]
Since $G$ is finite and acts by isometries, $D$ is a metric on $\Y_G(\Gamma)$.
Thus a point of $\Y_G(\Gamma)$ is a bounded family of amplitudes, with a color
at each coordinate of positive amplitude, considered modulo one common element
of $G$.

\begin{theorem}\label{thm:groups}
For every nonempty finite color set $\mathcal C$, every $G\le\operatorname{Sym}(\mathcal C)$, and every set $\Gamma$, the space
$\Y_G(\Gamma)$ is an absolute $3$-Lipschitz retract.
\end{theorem}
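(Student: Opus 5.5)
We construct the extension directly, following the characterization of absolute Lipschitz retracts recalled in Section~\ref{sec:prelim}. Let $A\subset M$ be nonempty and let $f\colon A\to\Y_G(\Gamma)$ be $1$-Lipschitz (after rescaling). We need $F\colon M\to\Y_G(\Gamma)$ extending $f$ with $\Lip(F)\le3$.

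The idea is to lift to a space where extension is free, and then project back using a canonical choice of colors. First we encode a point $[u]$ by $G$-invariant real data. For each coordinate $\gamma$ and each color $r$, set $h_{\gamma,r}(u)=d_{\T}(u_\gamma,(0,r))$ signed to distinguish the ray; concretely, take the signed functions $\phi_{\gamma,r}(u)=a$ if $u_\gamma=(a,r)$ and $-a$ otherwise. These data are not invariant under $G$. So instead we extend amplitudes and pairwise "color relations". Amplitudes $|u_\gamma|$ are $1$-Lipschitz in $[u]$ and $G$-invariant, so McShane gives extensions $\alpha_\gamma$, $1$-Lipschitz. For colors, the relevant invariant is, for pairs $\gamma,\delta$, the quantity $d_{\T}(u_\gamma,u_\delta)$ together with the orbit type of the pair of colors. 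This is too weak in general, since $G$ need not be transitive on pairs.

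The plan I would actually commit to is the following. Choose a base point: for $p\in M$, pick a near-optimal $a_p\in A$ with $d(p,a_p)\le 2\dist(p,A)$ (or an infimum argument). Fix a representative $u^{a}$ of $f(a)$ for each $a$. Define $F(p)$ coordinatewise: the amplitude is the McShane-type quantity
\[
\beta_\gamma(p)=\inf_{a\in A}\bigl(\operatorname{amp}(f(a)_\gamma)+ d(p,a)\bigr)-\dist(p,A),
\]
truncated at $0$, and the color is taken from a representative of $f(a_p)$ aligned with the optimal $g$. The key estimate is that a coordinate whose color is ambiguous has amplitude at most $\dist(p,A)$ after the subtraction, so it is shrunk to the origin or near it. This is the source of the constant $3$: one unit from the McShane term, and two from the subtracted $\dist(p,A)$, which is $1$-Lipschitz and enters the star metric with a factor $2$ when colors differ ($a+b$ instead of $|a-b|$). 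For two points $p,q$, align the representatives of $f(a_p)$ and $f(a_q)$ by the optimal $g\in G$. On coordinates where colors agree, use $|\beta_\gamma(p)-\beta_\gamma(q)|\le 2d(p,q)$. Where the colors disagree, show both amplitudes are at most $\tfrac32 d(p,q)$, using $d(a_p,a_q)\le d(p,q)+2\dist(p,A)+\ldots$ and the fact that $f(a_p)$, $f(a_q)$ disagree in color only where both amplitudes are at most $D(f(a_p),f(a_q))/2$-ish. Lemma~\ref{lem:minima} will be used to compare the two infima against the common alignment $g$.

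The main obstacle is the global choice of $g$. Alignment is done per pair of points, but the coloring of $F(p)$ must be one fixed representative modulo $G$. The delicate part is showing that the choice of $a_p$ (non-unique and discontinuous) does not matter. If $a_p$ and $a_p'$ are both admissible, then the coordinates where their aligned colors differ have amplitude below $\dist(p,A)$, so they are killed by the truncation. Near $A$ the construction must reduce to $f$, which holds because $\dist(p,A)=0$ there. If the crude choice of $a_p$ costs too much, I would try replacing the subtraction by $c\,\dist(p,A)$ and optimize $c$ to reach the constant $3$.
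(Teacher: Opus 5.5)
\textbf{There is a genuine gap: the construction you commit to is not Lipschitz.} Take $\mathcal C=\{1,2\}$, $G$ trivial, and $\Gamma$ a single point, so $\Y_G(\Gamma)=\T_{\mathcal C}$ is a line. Let $M=\{x,y,p,q\}$ with
\[
d(x,y)=2,\quad d(p,x)=d(q,y)=1,\quad d(p,y)=d(q,x)=1+\varepsilon,\quad d(p,q)=\varepsilon,
\]
$A=\{x,y\}$, $f(x)=(1,1)$ and $f(y)=(1,2)$; this $f$ is $1$-Lipschitz. Choose the nearest points $a_p=x$ and $a_q=y$, which your rule allows. The McShane term equals $2$ at both $p$ and $q$, so your $\beta$ equals $1$ at both, with colors $1$ and $2$. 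Hence $D(F(p),F(q))=2$ while $d(p,q)=\varepsilon$.

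Your ``key estimate'' bounds the ambiguous amplitudes only by $\dist(p,A)$. That quantity is not small compared to $d(p,q)$, so the truncation kills nothing. With a factor-$2$ base point the color can even be undefined, since $\beta_\gamma(p)\le(\text{amplitude of }f(a_p)\text{ at }\gamma)+\dist(p,A)$ may be positive where $f(a_p)$ has amplitude $0$. The ``global choice of $g$'' that you flag is harmless. The Lipschitz bound is pairwise, so any fixed choice of $a_p$ and of representatives will do, with $g$ optimal for $f(a_p),f(a_q)$ in each pair; no well-definedness across choices is needed.

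\textbf{What would make this route work.} Subtract $2\dist(\cdot,A)$, not $\dist(\cdot,A)$, and use nearest points. Write $\alpha_\gamma(a)$ for the amplitude of $f(a)$ at $\gamma$, $m_\gamma$ for its McShane extension, $R_p=\dist(p,A)$, and $\beta_\gamma(p)=\pos{m_\gamma(p)-2R_p}$, which is $3$-Lipschitz. If $d(p,a_p)=R_p$, then $\beta_\gamma(p)>0$ forces $\alpha_\gamma(a_p)>R_p$, so the color is defined. At a coordinate where $\beta_\gamma(p),\beta_\gamma(q)>0$ but the aligned colors of $f(a_p)$ and $f(a_q)$ differ,
\[
\beta_\gamma(p)+\beta_\gamma(q)\le\alpha_\gamma(a_p)+\alpha_\gamma(a_q)-R_p-R_q\le d(a_p,a_q)-R_p-R_q\le d(p,q).
\]
At all other coordinates the distance is at most $|\beta_\gamma(p)-\beta_\gamma(q)|\le3\,d(p,q)$.

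Nearest points need not exist in $M$, however. First extend $f$ to $\overline A$ using completeness. Then take $d(p,a_p)\le(1+\eta)R_p$ and subtract $(2+2\eta)R_p$; the same computation gives only the constant $3+2\eta$. Reaching exactly $3$ needs an extra compactness step. One option is coordinatewise ultralimits of representatives in $\T_{\mathcal C}$, whose closed bounded sets are compact; the aligning elements $g_k\in G$ stabilize along the ultrafilter because $G$ is finite.

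None of these ingredients is in your proposal: the coefficient $2$, the near-optimality factor $1+\eta$, and the limiting argument. Lemma~\ref{lem:minima} plays no role on this route. The paper avoids base points altogether. It encodes $[u]$ in $\ell_\infty(J)$ by amplitudes and signed minima over a separating family of tests. It decodes via a $2$-Lipschitz threshold $\tau$ and a compactness argument in $\mathcal C^S$, which also yields the explicit maps $E,R$ reused in Sections~\ref{sec:allQ} and~\ref{sec:finite}.
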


We will obtain Theorem~\ref{thm:groups} from a more precise statement that specifies both an \textit{encoding} of 
$\Y_G(\Gamma)$ by elements of $\ell_\infty$ and its subsequent \textit{reconstruction}, both being Lipschitz maps.
A \emph{coloring} of a set $S\subset\Gamma$ is an element $r\in\mathcal C^S$. The group $G$ acts on colorings by
$(gr)_\gamma=g(r_\gamma)$, and $r|_H$ denotes the restriction of $r$ to a subset $H\subset S$.
We say that a \emph{test} $p$ consists of a nonempty finite subset $H_p\subset \Gamma$ and a function
$\sigma_p\colon \mathcal C^{H_p}\to\{-1,1\}$ that is constant on $G$-orbits. That is, $\sigma_p$ assigns a sign
to every coloring of $H_p$, and colorings related by $G$ receive the same sign. 
We say that a test $p$ is \emph{supported} in a set $S\subset\Gamma$ if $H_p\subset S$. 

A family $\mathcal P$ of tests is \emph{separating} if for every $S\subset \Gamma$ and all colorings $r,s\in\mathcal C^S$,
\begin{equation}\label{eq:separation}
\left.
\begin{gathered}
\sigma_p(r|_{H_p})=\sigma_p(s|_{H_p})\\
\text{for all }p\in\mathcal P\text{ supported in }S
\end{gathered}
\right\}
\quad\Longrightarrow\quad r=gs\text{ for some }g\in G.
\end{equation}
That is, the outcomes of the tests supported in $S$ determine every coloring of $S$ up to the action of $G$. 

\begin{example}\label{two-color-example} To illustrate the preceding definitions, consider the case of two colors: 
$\mathcal C=\{1,-1\}$ and $G=\operatorname{Sym}(\mathcal C)$. Identifying $(a,r)$ with $ar$ identifies $\T_{\mathcal C}$ with $\R$, and
therefore $\Y_G(\Gamma)$ is the quotient of $V=\ell_\infty(\Gamma)$ by the antipodal map $x\mapsto -x$.
We denote this \emph{antipodal quotient} by $\Vt$; its metric is 
\[
\dt([x],[y])=\min(\|x-y\|,\|x+y\|).
\]
By Theorem~\ref{thm:groups}, $\Vt$ is an absolute $3$-Lipschitz retract. 

We can also give an explicit separating family $\mathcal P$. For each two-element subset 
$H_p=\{\alpha, \beta\}\subset \Gamma$ and each coloring $r$ of $H_p$, let $\sigma_p(r) = r_\alpha r_\beta$; we index
this test by $\alpha\beta$. The values of such products on all two-element subsets of a set $S$ determine
$r\in\mathcal C^S$ up to a global change of sign. For this family, the encoding map $E$ of
Proposition~\ref{prop:reconstruction} below is
\[
E([x])_\alpha=|x_\alpha|,\qquad
E([x])_{\alpha\beta}=\sign(x_\alpha x_\beta)\min(|x_\alpha|,|x_\beta|),
\]
which may look discontinuous until one notices that 
\[
\sign(x_\alpha x_\beta)\min(|x_\alpha|,|x_\beta|) = \frac{|x_\alpha + x_\beta|}{2} - \frac{|x_\alpha - x_\beta|}{2},
\]
cf. \cite[Proposition 5.3.1]{Biermann}.
\end{example}

\begin{proposition}\label{prop:reconstruction}
Suppose $\mathcal P$ is a separating family of tests, and let
$J=\Gamma\sqcup\mathcal P$. There are maps
\[
E:\Y_G(\Gamma)\to\ell_\infty(J),\qquad
R:\ell_\infty(J)\to\Y_G(\Gamma)
\]
such that $E$ is $1$-Lipschitz, $R\circ E=\mathrm{id}$, and $\Lip(R)\le3$.
\end{proposition}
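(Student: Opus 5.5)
The plan is to take $E$ to be an explicit ``amplitudes plus signed minima'' map, as in Example~\ref{two-color-example}, and to build $R$ by a level-set construction that uses the separation property~\eqref{eq:separation} to choose colors.

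\emph{Encoding.} For $[u]\in\Y_G(\Gamma)$ with $u_\gamma=(a_\gamma,r_\gamma)$, set $E([u])_\gamma=a_\gamma$ for $\gamma\in\Gamma$. For $p\in\mathcal P$, set
\[
E([u])_p=\sigma_p(r|_{H_p})\min_{\gamma\in H_p}a_\gamma ,
\]
interpreted as $0$ when some amplitude on $H_p$ vanishes, since then the coloring is undefined but the minimum is $0$. This is well defined on the quotient because $\sigma_p$ is constant on $G$-orbits. To check that $E$ is $1$-Lipschitz, fix $g$ realizing $D([u],[v])$ and compare $u$ with $gv$ coordinate by coordinate.
\begin{itemize}
\item The amplitude coordinates change by at most $d_{\T}$.
\item For a test $p$ on which the two colorings of $H_p$ give the same sign, the difference of the two minima is at most $\max_{H_p}|a_\gamma-b_\gamma|$.
\item If the signs differ, the colorings differ at some $\gamma\in H_p$. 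There $d_{\T}(u_\gamma,gv_\gamma)=a_\gamma+b_\gamma$, which is at least $\min_{H_p}a+\min_{H_p}b$, and that sum is exactly the difference of the two signed minima.
\end{itemize}
So $\Lip(E)\le1$.

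\emph{Reconstruction.} Given $x\in\ell_\infty(J)$, I would first measure how far $x$ is from being consistent. Consider a set $S$ on which all amplitudes $x_\gamma$ exceed some level $t$. The tests supported in $S$ with $|x_p|>t$ dictate signs, and by~\eqref{eq:separation} any two colorings of $S$ realizing those signs agree up to $G$. For each $t$ I would let $S_t=\{\gamma:x_\gamma>t\}$ and define a defect $\eta(x)$: the smallest shift $\eta$ such that, for every $t$, the tests supported in $S_{t+\eta}$ with $|x_p|\ge t$ and $|x_p|\le\min_{H_p}x_\gamma+\eta$ are simultaneously realizable by one coloring. Then output amplitudes $\pos{x_\gamma-2\eta}$ (or a coordinatewise variant $\pos{x_\gamma-2\eta_\gamma}$) and colors given by that coloring on the support. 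Nestedness of the $S_t$ should make the colorings at different levels compatible up to a single $g\in G$.

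For $x=E([u])$ the defect is $0$, and $R\circ E=\mathrm{id}$ follows from separation. The bound $\Lip(R)\le3$ should split as $1$ from the amplitudes plus $2$ from the truncation by the defect, since $\eta$ is itself $1$-Lipschitz in $x$. Lemma~\ref{lem:minima} is the tool for comparing minima of sums when two nearby inputs force different colorings. In that case one shows that the relevant amplitudes on both sides are at most comparable to $\|x-y\|$, so the mismatch costs $a+b\le3\|x-y\|$.

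The main obstacle is the reconstruction step. Defining $\eta$ so that it is simultaneously Lipschitz, vanishes on the image of $E$, and guarantees a globally consistent coloring (one common $g$ across all levels and all of $\Gamma$) is the delicate part. I would first try the global scalar defect above, and then verify the $3$-Lipschitz bound by comparing $R(x)$ and $R(y)$ at the level $t\approx\max(\eta(x),\eta(y))+\|x-y\|$, where both colorings are determined by the same tests.
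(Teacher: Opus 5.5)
Your encoding $E$ and your proof that it is $1$-Lipschitz agree with the paper. The gap is in the reconstruction. $R$ is never actually defined, and the defect $\eta$ you propose fails under either reading of its definition.

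\emph{Filter reading.} If ``with $|x_p|\ge t$'' filters the tests, then tests with unreliable signs are discarded instead of penalized, so nothing keeps them out of the output support. In Example~\ref{two-color-example} with $\Gamma=\{\alpha,\beta\}$, take $x_\alpha=x_\beta=10$ and $x_{\alpha\beta}=\pm\epsilon$. At each level the test is either excluded ($t>\epsilon$) or realizable, so $\eta=0$ for both signs. The outputs are then $[(10,10)]$ and $[(10,-10)]$, at distance $20$, although the inputs are $2\epsilon$ apart.

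\emph{Requirement reading.} If instead the clause is imposed on all tests supported in $S_{t+\eta}$, it forces $\eta\ge\sup_p\Delta(x,p)$ with $\Delta(x,p)=\min_{H_p}x_\gamma-|x_p|$. This quantity is only $2$-Lipschitz and no better: raise the amplitudes on $H_p$ by $\varepsilon$ and lower $|x_p|$ by $\varepsilon$. So $\eta$ is not $1$-Lipschitz. Truncating at $2\eta$ then lets the amplitude at another coordinate move by $\|x-y\|+4\|x-y\|$, and the constant $3$ is lost. The upper bound $|x_p|\le\min_{H_p}x_\gamma+\eta$ is unnecessary in any case, since only $\sign x_p$ is ever used.

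The missing idea is a single threshold, subtracted once:
\[
\tau(z)=\sup\Bigl(\{0\}\cup\{\Delta(z,p):p\in\mathcal P\}\cup\bigl\{\min_{\gamma\in F}z_\gamma: F\text{ finite and inconsistent for }z\bigr\}\Bigr),
\qquad a_\gamma=\pos{z_\gamma-\tau(z)}.
\]
The defect term gives $|z_p|\ge\min_{H_p}a_\gamma>0$ for every test supported in $S=\{a_\gamma>0\}$. The inconsistency term makes every finite subset of $S$ consistent. Compactness of $\mathcal C^S$ then produces one coloring of $S$, unique modulo $G$ by~\eqref{eq:separation}. There are no levels to glue, so the question of a common $g$ across levels never arises.

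The key estimate is that $\tau$ is $2$-Lipschitz, and it needs both terms at once. If $F$ is inconsistent for $z$ but consistent for $w$, some test $p$ supported in $F$ has $z_pw_p\le0$, hence $|w_p|\le\varepsilon$. Then $\min_F z_\gamma\le\min_{H_p}w_\gamma+\varepsilon\le\tau(w)+|w_p|+\varepsilon\le\tau(w)+2\varepsilon$. This gives $|a_\gamma-b_\gamma|\le3\varepsilon$.

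On $\{a_\gamma+b_\gamma>3\varepsilon\}$ the thresholds cancel in $a_\gamma-b_\gamma$, and Lemma~\ref{lem:minima} yields $\min_{H_p}a_\gamma+\min_{H_p}b_\gamma>2\varepsilon$. The inequalities $|z_p|\ge\min_{H_p}a_\gamma$ and $|w_p|\ge\min_{H_p}b_\gamma$ then rule out opposite signs of $z_p$ and $w_p$. Your sketch of the $3$-Lipschitz bound has no substitute for these last two inequalities.
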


\begin{proof} \textsc{Step 1: encoding.}
Let $u$ be a representative of $[u]\in\Y_G(\Gamma)$, write $u_\gamma=(a_\gamma,r_\gamma)$, and define
\begin{equation}\label{eq:Egeneral}
E([u])_\gamma=a_\gamma,\qquad
E([u])_p=\sigma_p(r|_{H_p})\min_{\gamma\in H_p}a_\gamma.
\end{equation}
Since every $(0,r)$ is the origin, $r_\gamma$ is undefined when
$a_\gamma=0$; this causes no ambiguity, because then $E([u])_p=0$ for every
test $p$ with $\gamma\in H_p$, whatever the value of $\sigma_p$. The
invariance of $\sigma_p$ makes $E$ well defined.
To prove that $E$ is $1$-Lipschitz, let $\delta=D([u],[v])$, choose
representatives $u,v$ with $d_{\T}(u_\gamma,v_\gamma)\le\delta$ for all
$\gamma\in\Gamma$, and write $v_\gamma=(b_\gamma,s_\gamma)$.
The amplitudes differ by at most $\delta$, and so do their minima on any
finite set. If $\sigma_p$ takes different values on the two colorings,
some coordinate $\gamma\in H_p$ changes color. At this coordinate
$a_\gamma+b_\gamma\le\delta$, and hence the sum of the two minima is
at most $\delta$ as well. Thus $E$ is $1$-Lipschitz.

\textsc{Step 2: consistency.} 
For an arbitrary element $z\in\ell_\infty(J)$, its entries $z_p$, $p\in \mathcal P$, need not have any relation to $z_\gamma$, $\gamma\in \Gamma$.
We quantify this using the magnitude and sign of $z_p$. 

The quantity $\Delta(z,p)=\min_{\gamma\in H_p}z_\gamma-|z_p|$ is called the \emph{defect} of the test $p$ on the element $z$. A nonempty finite set $F\subset \Gamma$ is called \emph{inconsistent} for $z$ if there is no coloring $r\in\mathcal C^F$ satisfying
\begin{equation}\label{eq:constraints}
 \sigma_p(r|_{H_p})=\sign z_p   \quad\text{whenever $p$ is supported in $F$ and $z_p\ne0$;} 
\end{equation}
otherwise $F$ is \emph{consistent} for $z$.

We define a \textit{threshold} that will ensure that the truncated amplitudes admit a consistent reconstruction:
\begin{equation}\label{eq:threshold-general}
\tau(z)=\sup\Bigl( \{0\}\cup \bigl\{ \Delta(z,p) \colon p\in\mathcal P\bigr\} \cup\bigl\{\min_{\gamma\in F}z_\gamma:
F\text{ is inconsistent for }z\bigr\}\Bigr).
\end{equation}
Every term in~\eqref{eq:threshold-general} is at most
$\|z\|$, and $\tau(z)\ge0$. With $x^+ = \max(x,0)$, set
\[
a_\gamma=\pos{z_\gamma-\tau(z)},\quad S=\{\gamma\in \Gamma : a_\gamma>0\}.
\]
For every test $p$ supported in $S$, the inequality $\tau(z)\ge \Delta(z,p)$ gives 
\begin{equation}\label{eq:edge-general}
|z_p|\ge\min_{\gamma\in H_p}a_\gamma>0.
\end{equation}
Moreover, every finite $F\subset S$ is consistent: otherwise its contribution to $\tau(z)$ would be $\min_{\gamma\in F}z_\gamma>\tau(z)$.

\textsc{Step 3: Lipschitz bound.} We claim that the function $\tau$ defined by~\eqref{eq:threshold-general} is $2$-Lipschitz.
Indeed, let $\varepsilon=\|z-w\|$. Each test $p$ satisfies $|\Delta(z,p) - \Delta(w,p)| \le 2\varepsilon$. 
Suppose a set $F$ is inconsistent for $z$. If it is also inconsistent for $w$, its contributions to $\tau(z)$ and $\tau(w)$ 
differ by at most $\varepsilon$. Otherwise, choose a coloring on $F$ satisfying the constraints for $w$. It fails some
constraint for $z$, say at a test $p$ supported in $F$ with $z_p\ne0$. 
Consequently $z_pw_p\le 0$ which forces $|w_p|\le\varepsilon$. Using this together with $\Delta(w, p)\le \tau(w)$, we obtain 
\[
\min_{\gamma\in F}z_\gamma
\le\min_{\gamma\in H_p}w_\gamma+\varepsilon
\le\tau(w)+|w_p|+\varepsilon
\le\tau(w)+2\varepsilon.
\]
Taking suprema and interchanging $z,w$ proves the assertion.

\textsc{Step 4: reconstruction.} By~\eqref{eq:edge-general}, every test $p$ supported in $S$ has $z_p\ne0$. 
We claim that there is a coloring $r\in\mathcal C^S$ that satisfies the constraint~\eqref{eq:constraints} for each such $p$. If $S$ is finite, this is the consistency of $S$. In general, each constraint depends only on the coordinates in $H_p$, so it specifies a closed subset of the compact product $\mathcal C^S$, where $\mathcal C$ has the discrete topology. Finitely many constraints involve only a finite union $F$ of supports; since $F$ is consistent, they have a common solution on $F$, which extends arbitrarily to $S$. Compactness gives a simultaneous solution, and~\eqref{eq:separation} makes it unique modulo $G$. We may therefore define $R(z)\in \Y_G(\Gamma)$ by 
\begin{equation}\label{eq:Rgeneral}
R(z)=[(a_\gamma,r_\gamma)_{\gamma\in \Gamma}],
\end{equation}
with origin coordinates outside $S$.

Suppose that $z=E([u])$ for some $[u]\in \Y_G(\Gamma)$. Then $\Delta(z,p)\le 0$ for every $p$.  Moreover, $z_p\ne0$ forces all amplitudes of $u$ on $H_p$ to be
positive, so the colors of $u$, extended arbitrarily to the coordinates of zero
amplitude, satisfy~\eqref{eq:constraints} on every finite set. Hence no set is
inconsistent for $z$. Thus $\tau(z)=0$, the amplitudes are unchanged by $R$,
and $R(E([u]))=[u]$ by~\eqref{eq:separation}.

\textsc{Step 5: conclusion.}
That $E$ is $1$-Lipschitz and $RE=\mathrm{id}$ was shown above.
Let $\varepsilon=\|z-w\|$, and write
$R(z)=[(a_\gamma,r_\gamma)]$, $R(w)=[(b_\gamma,s_\gamma)]$.
The Lipschitz bound on $\tau$ gives
\begin{equation}\label{eq:amp}
|a_\gamma-b_\gamma|\le 3\varepsilon.
\end{equation}
On $S_0=\{\gamma:a_\gamma+b_\gamma>3\varepsilon\}$ both
amplitudes are positive: if one vanished, then~\eqref{eq:amp}
would give
$a_\gamma+b_\gamma=|a_\gamma-b_\gamma|\le3\varepsilon$.
Hence the terms $\tau(z)$ and $\tau(w)$ cancel in differences:
\[
|(a_\gamma-b_\gamma)-(a_{\gamma'}-b_{\gamma'})|
=|(z_\gamma-w_\gamma)-(z_{\gamma'}-w_{\gamma'})|\le2\varepsilon,
\qquad \gamma,\gamma'\in S_0.
\]
By Lemma~\ref{lem:minima}, every test supported in $S_0$ satisfies
\[
\min_{\gamma\in H_p}a_\gamma+\min_{\gamma\in H_p}b_\gamma
\ge\min_{\gamma\in H_p}(a_\gamma+b_\gamma)-\varepsilon
>2\varepsilon.
\]
Opposite signs of $z_p,w_p$ would contradict $|z_p-w_p|\le\varepsilon$
by~\eqref{eq:edge-general}. Thus
$\sigma_p(r|_{H_p})=\sign z_p=\sign w_p=\sigma_p(s|_{H_p})$ for every test
supported in $S_0$. Property~\eqref{eq:separation} supplies one $g\in G$ matching
the colors on $S_0$. After this permutation, the coordinate distances
on $S_0$ are at most $3\varepsilon$ by~\eqref{eq:amp}; outside $S_0$ they
are at most $a_\gamma+b_\gamma\le3\varepsilon$.
Therefore $D(R(z),R(w))\le 3\varepsilon$.
\end{proof}

\begin{proof}[Proof of Theorem~\ref{thm:groups}]
By Proposition~\ref{prop:reconstruction} and Lemma~\ref{lem:transfer}, it
suffices to find a separating family of tests. For
each nonempty finite $H\subset\Gamma$ and each $G$-orbit $O$ in
$\mathcal C^H$, take the test
\begin{equation}\label{eq:orbit-test}
\sigma_{H,O}(r)=
\begin{cases}1,&r\in O,\\-1,&r\notin O.\end{cases}
\end{equation}
If two colorings of $H$ lie in different orbits, the test for the orbit
of the first one takes the values $1$ and $-1$ on them. Hence
these orbit tests form a separating family by the following claim: \emph{if $r,s\in\mathcal C^S$
are not related by $G$, then there is a nonempty finite $H\subset S$
such that $r|_H$ and $s|_H$ are not related by $G$.} To prove it, choose
for each $g\in G$ a coordinate $\gamma_g\in S$ with
$gs_{\gamma_g}\ne r_{\gamma_g}$, which exists because $gs\ne r$. The set
$H=\{\gamma_g:g\in G\}$ is finite and nonempty, and no $g\in G$ carries
$s|_H$ to $r|_H$.
\end{proof}

\begin{remark}
In the setting of Example~\ref{two-color-example}, the
threshold~\eqref{eq:threshold-general} can be computed from pairs and triples
of distinct indices $\alpha,\beta,\gamma\in\Gamma$:
\[
\tau(z) = \sup\Bigl( \{0\}\cup \bigl\{\min(z_\alpha, z_\beta) - |z_{\alpha\beta}|\bigr\} \cup
\bigl\{\min(z_\alpha,z_\beta,z_\gamma) : z_{\alpha\beta} z_{\alpha\gamma} z_{\beta\gamma} < 0\bigr\}\Bigr).
\]
Indeed, a triple with $z_{\alpha\beta} z_{\alpha\gamma} z_{\beta\gamma} < 0$
is inconsistent for $z$. Conversely, let $F$ be inconsistent for $z$. If
$z_{\alpha\beta}=0$ for some $\alpha,\beta\in F$, the defect of the test
$\alpha\beta$ is $\min(z_\alpha,z_\beta)\ge\min_{\gamma\in F}z_\gamma$.
Otherwise every pair in $F$ carries a nonzero sign, and such an assignment of
signs to all pairs of $F$ is realized by a two-coloring as soon as it is
realized on every triple, so $F$ contains an inconsistent triple. In either
case, the contribution of $F$ to~\eqref{eq:threshold-general} is bounded by one
of the terms displayed above. After the
truncation $a_\gamma=\pos{z_\gamma-\tau(z)}$, every pair in $S$ carries a
nonzero sign by~\eqref{eq:edge-general}, and there are no triples with
$a_\alpha,a_\beta,a_\gamma>0$ and
$z_{\alpha\beta} z_{\alpha\gamma} z_{\beta\gamma} < 0$. This allows one to
find a two-coloring of $S$ that passes all tests supported in $S$.
\end{remark}

\section{Proof of Theorem~\ref{thm:main}}\label{sec:allQ}

We derive Theorem~\ref{thm:main} from its special case that is stated below as a proposition. 

\begin{proposition}\label{prop:allQ}
For every $Q\ge2$ and every index set $I$, the space
$\Q_Q(\ell_\infty(I))$ is an absolute $(3Q-2)$-Lipschitz retract.
\end{proposition}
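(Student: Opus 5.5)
The plan is to realize $\Q_Q(\ell_\infty(I))$ inside $\ell_\infty(I)\times\Y_G(\Gamma)$ and to extend each factor separately. The extension gives constant $1$ in the first factor, by the McShane formula, and constant $3$ in the second, by Theorem~\ref{thm:groups}. A reconstruction map then adds the $Q$ contributions. I will not apply Lemma~\ref{lem:transfer} to a single product space. Instead I will use the extension characterization directly, so that the constants add to $1+3(Q-1)$ rather than multiply.

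\textbf{Setup.} Take $\Gamma=I\times\{1,\dots,Q-1\}$. Let $\mathcal C$ be the set of nonempty proper subsets of $\{1,\dots,Q\}$, and let $G\le\operatorname{Sym}(\mathcal C)$ be the image of $S_Q$ acting on subsets.

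\textbf{Encoding.} For $X=\lb x_1,\dots,x_Q\rb$, pick labels and, for each $i\in I$, let $x_{(1)}(i)\le\dots\le x_{(Q)}(i)$ be the order statistics.

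1. The midrange is $m(i)=\tfrac12(x_{(1)}(i)+x_{(Q)}(i))$.
2. The $k$th gap at $i$ is the star point $(a_{i,k},r_{i,k})$, where $a_{i,k}=\tfrac12(x_{(k+1)}(i)-x_{(k)}(i))$.
3. When $a_{i,k}>0$, the color $r_{i,k}$ is the set of labels $j$ with $x_j(i)\le x_{(k)}(i)$. This set has exactly $k$ elements and is well defined despite ties.

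Relabeling by $\sigma\in S_Q$ changes every color by the same element of $G$. Hence $\Phi(X)=(m,[u])$ is well defined, where $u=(a_{i,k},r_{i,k})$. Amplitudes are bounded by $\|x_j\|$.

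\textbf{Lipschitz bounds for $\Phi$.} The midrange and order statistics are coordinatewise $1$-Lipschitz. For the star part, fix a matching of labels realizing $\G(X,Y)=\delta$ and use the same labels for $Y$. Amplitudes then differ by at most $\delta$.

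The point to check is a color change at $(i,k)$ with both amplitudes $a,b>0$. The two $k$-element sets of lower labels differ, so there are labels $j,j'$ with $j$ below and $j'$ above in $X$, and the reverse in $Y$. Then
\[
2a+2b\le (x_{j'}(i)-x_j(i))+(y_j(i)-y_{j'}(i))\le 2\delta .
\]
So $d_{\T}\le\delta$, and the star part of $\Phi$ is $1$-Lipschitz into $\Y_G(\Gamma)$.

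\textbf{Reconstruction.} Given $m\in\ell_\infty(I)$ and a representative $u=(a_{i,k},r_{i,k})$ of a point of $\Y_G(\Gamma)$, define
\[
\Psi(m,[u])=\lb y_1,\dots,y_Q\rb,\qquad y_j(i)=m(i)+\sum_{k=1}^{Q-1}\epsilon_{j,i,k}\,a_{i,k},
\]
where $\epsilon_{j,i,k}=-1$ if $j\in r_{i,k}$ and $+1$ otherwise. Zero amplitude makes the sign irrelevant. A global $g\in G$ only permutes the labels, so the unordered tuple is well defined. The colors need not be nested, but $\Psi$ still yields a valid element of $\Q_Q(\ell_\infty(I))$.

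The formula recovers the tuple: the smallest point is $m-\sum_k a_{i,k}$, and each gap below $j$ adds $2a_{i,k}$. Hence $\Psi\circ\Phi=\mathrm{id}$.

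To estimate $\Psi$, compare two inputs using representatives realizing $D$ and the same labels. Each term changes by at most $|a-b|$ if the colors agree and by at most $a+b$ otherwise, which is $d_{\T}$ in both cases. Therefore
\[
|y_j(i)-y'_j(i)|\le|m(i)-m'(i)|+\sum_{k}d_{\T}(u_{i,k},u'_{i,k}).
\]

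\textbf{Conclusion.} Let $f\colon A\to\Q_Q(\ell_\infty(I))$ be $L$-Lipschitz with $A\subset M$. Extend $m\circ f$ by McShane with constant $L$. Extend the star part of $\Phi\circ f$ to $M$ with constant $3L$ by Theorem~\ref{thm:groups}. Composing with $\Psi$ gives an extension of $f$ with Lipschitz constant $L+3(Q-1)L=(3Q-2)L$.

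The main obstacle is the color-consistency estimate $a+b\le\delta$. It depends on using a single matching for all coordinates simultaneously, and that same matching is what makes the action of one common element of $G$ suffice.
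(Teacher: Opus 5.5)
Your proof is correct and follows the paper's argument essentially step for step: the same coordinatewise midrange, the same gap encoding into $\Y_{S_Q}(\Gamma)$ with subset colors, the same estimate $a+b\le\delta$ for a color change, and the same reconstruction with the additive bound $\|m-m'\|+(Q-1)D$. The only difference is the final assembly: the paper composes with the explicit maps $E,R$ of Proposition~\ref{prop:reconstruction} and applies Lemma~\ref{lem:transfer} to $V\oplus_\infty\ell_\infty(J)$, so the factor $3$ affects only the $z$-component of $\Psi$, whereas you extend the two components separately via the extension characterization, using Theorem~\ref{thm:groups} as a black box, and obtain the same constant $1+3(Q-1)$.
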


\begin{proof}[Proof of Theorem~\ref{thm:main} and Corollary~\ref{cor:X2}]
The case $Q=1$ is immediate. For $Q\ge 2$, embed $X$ isometrically into
$V=\ell_\infty(I)$ for some suitable index set $I$ and choose a $\lambda$-Lipschitz retraction $\pi\colon V\to X$.
The induced map $\Q_Q(\pi)\colon\Q_Q(V)\to\Q_Q(X)$ is a $\lambda$-Lipschitz retraction. Proposition~\ref{prop:allQ} and
Lemma~\ref{lem:transfer} show that $\Q_Q(X)$ is an absolute Lipschitz retract with the constant $(3Q-2)\lambda$.
For $Q=2$, Lemma~\ref{lem:matching} gives the assertion about $X(2)$. 
\end{proof}

\begin{proof}[Proof of Proposition~\ref{prop:allQ}] 
Let $V=\ell_\infty(I)$. We will construct a $1$-Lipschitz embedding of $\Q_Q(V)$ into some $\ell_\infty$ space (an absolute $1$-Lipschitz retract) and then show that the embedding admits a $(3Q-2)$-Lipschitz left inverse.  

\textsc{Step 1: coordinatewise midrange.} 
For $A=\lb x_1,\ldots,x_Q\rb\in\Q_Q(V)$ and $i\in I$, let $x_k(i)$ denote the $i$th coordinate of $x_k$, and let 
$x_{(j)}(i)$ be the $j$th order statistic of $x_1(i),\dots,x_Q(i)$, that is, the $j$th smallest among these $Q$ numbers.
The order statistics do not depend on the labeling of $A$. If $B=\lb y_1,\ldots,y_Q\rb$ is labeled so that
$\|x_k-y_k\|\le\G(A,B)$ for every $k$, then, since order statistics are $1$-Lipschitz on $\R^Q$ with the maximum norm,
\begin{equation}\label{eq:orderstat}
|x_{(j)}(i)-y_{(j)}(i)|\le\G(A,B),\qquad i\in I,\quad j=1,\dots,Q.
\end{equation}
It follows that the \textit{coordinatewise midrange} $c(A)\in V$, defined by  
\[
c(A)_i=\frac{x_{(1)}(i)+x_{(Q)}(i)}{2},
\]
is a $1$-Lipschitz map from $\Q_Q(V)$ to $V$. 

\textsc{Step 2: encoding the gaps.} Let $\mathcal C=\{U\subset\{1,\ldots,Q\}:0<|U|<Q\}$. The symmetric group
$S_Q$ acts faithfully on $\mathcal C$ by permuting the underlying labels, so we regard it as a subgroup of
$\operatorname{Sym}(\mathcal C)$. Let $\Gamma=I\times\{1,\ldots,Q-1\}$, and consider the space $\Y_{S_Q}(\Gamma)$
of Section~\ref{sec:colors} for the color set $\mathcal C$. We write $t_{ij}$ for the coordinate $t_{(i,j)}$ of
$t\in\ell_\infty(\Gamma,\T_{\mathcal C})$.

Given $A=\lb x_1,\ldots,x_Q\rb\in\Q_Q(V)$ with a fixed labeling, define $t\in\ell_\infty(\Gamma,\T_{\mathcal C})$
as follows. For $(i,j)\in\Gamma$ let
\[
a_{ij}=\frac{x_{(j+1)}(i)-x_{(j)}(i)}{2}.
\]
If $a_{ij}=0$, let $t_{ij}$ be the origin of $\T_{\mathcal C}$. Otherwise let $t_{ij}=(a_{ij},U_{ij})$, where
\[
U_{ij}=\{k\in\{1,\dots,Q\} : x_k(i)\le x_{(j)}(i)\}.
\]
Since $x_{(j)}(i)<x_{(j+1)}(i)$ in this case, $U_{ij}$ consists of the labels of the $j$ smallest numbers among
$x_1(i),\dots,x_Q(i)$; in particular, $|U_{ij}|=j$ and $U_{ij}\in\mathcal C$. The amplitudes $a_{ij}$ are
bounded by $\max_k\|x_k\|$. Relabeling the elements of $A$ changes every color $U_{ij}$ by the same permutation,
so the orbit
$
T(A)=[t]\in\Y_{S_Q}(\Gamma)
$
is well defined.

\textsc{Step 3: Lipschitz bound.}
We claim that $T\colon \Q_Q(V) \to \Y_{S_Q}(\Gamma)$ is $1$-Lipschitz. Let $A,B\in\Q_Q(V)$, let $\delta=\G(A,B)$, and label
$A=\lb x_1,\ldots,x_Q\rb$, $B=\lb y_1,\ldots,y_Q\rb$ so that $\|x_k-y_k\|\le \delta$ for every $k$. Let $t$ and
$t'$ be the representatives of $T(A)$ and $T(B)$ obtained from these labelings, with amplitudes $a_{ij}$,
$a'_{ij}$ and colors $U_{ij}$, $U'_{ij}$. Since
$D(T(A),T(B))\le\sup_{(i,j)\in\Gamma}d_{\T}(t_{ij},t'_{ij})$, it suffices to show that
$d_{\T}(t_{ij},t'_{ij})\le\delta$ for every $(i,j)\in\Gamma$.

If $t_{ij}$ and $t'_{ij}$ lie on a common ray, which includes the case that one of them is the origin,
then~\eqref{eq:orderstat} gives
\[
d_{\T}(t_{ij},t'_{ij})=|a_{ij}-a'_{ij}|
\le\frac{|x_{(j+1)}(i)-y_{(j+1)}(i)|+|x_{(j)}(i)-y_{(j)}(i)|}{2}\le\delta.
\]
Otherwise both amplitudes are positive and $U_{ij}\ne U'_{ij}$. Since $|U_{ij}|=j=|U'_{ij}|$, we can choose
$k\in U_{ij}\setminus U'_{ij}$ and $l\in U'_{ij}\setminus U_{ij}$. Since $k\in U_{ij}$ and $l\notin U_{ij}$, we
have $x_k(i)\le x_{(j)}(i)$ and $x_l(i)\ge x_{(j+1)}(i)$, hence $2a_{ij}\le x_l(i)-x_k(i)$. Similarly,
$2a'_{ij}\le y_k(i)-y_l(i)$. Therefore
\[
d_{\T}(t_{ij},t'_{ij})=a_{ij}+a'_{ij}
\le\frac{\bigl(x_l(i)-y_l(i)\bigr)+\bigl(y_k(i)-x_k(i)\bigr)}{2}\le\delta,
\]
proving that $T$ is $1$-Lipschitz. 

\textsc{Step 4: reconstruction.} Let $\ell_\infty^Q$ denote $\R^Q$ with the maximum norm, on which $S_Q$ acts by
permuting coordinates. Let $\one=(1,\ldots,1)\in\R^Q$, and for $U\subset\{1,\dots,Q\}$ let $\one_U\in\R^Q$ be its
indicator vector. The map
\[
\eta:\T_{\mathcal C}\to\ell_\infty^Q,
\qquad \eta(a,U)=a(\one-2\,\one_U),\quad \eta(0)=0,
\]
is $S_Q$-equivariant and $1$-Lipschitz. On a common ray this follows by homogeneity, and on different rays by the
triangle inequality, since $\|\eta(a,U)\|=a$.
For $v\in V$ and $[t]\in\Y_{S_Q}(\Gamma)$, define $\Theta(v,[t])=\lb y_1,\ldots,y_Q\rb$, where
$y_1,\dots,y_Q\in V$ are given coordinatewise by
\begin{equation}\label{eq:HQ}
(y_1(i),\ldots,y_Q(i))=v_i\one+\sum_{j=1}^{Q-1}\eta(t_{ij}),\qquad i\in I.
\end{equation}
The vectors $y_k$ are bounded, and replacing $t$ by $gt$ with $g\in S_Q$ permutes $y_1,\dots,y_Q$ by the
equivariance of $\eta$; hence $\Theta$ is well defined. Choosing representatives $t,t'$ that realize the minimum in the definition
of $D([t],[t'])$ and using that $\eta$ is $1$-Lipschitz in each of the
$Q-1$ terms, we obtain
\begin{equation}\label{eq:HQlip}
\G(\Theta(v,[t]),\Theta(v',[t']))
\le\|v-v'\|+(Q-1)D([t],[t']).
\end{equation}

Moreover,
\begin{equation}\label{eq:HQinverse}
\Theta(c(A),T(A))=A.
\end{equation}
Indeed, fix a labeling $A=\lb x_1,\ldots,x_Q\rb$, let $t$ be the corresponding representative of $T(A)$ from
Step~2, and fix $i\in I$ and a label $k$. For each $j$ with $a_{ij}>0$ we have $k\notin U_{ij}$ exactly when
$x_k(i)\ge x_{(j+1)}(i)$. Summing the gaps between consecutive order statistics up to $x_k(i)$, and noting that
zero gaps contribute nothing, we obtain
\[
x_k(i)=x_{(1)}(i)+\sum_{j:\,a_{ij}>0,\ k\notin U_{ij}}2a_{ij},
\qquad
c(A)_i=x_{(1)}(i)+\sum_{j=1}^{Q-1}a_{ij}.
\]
Subtracting, we find that $x_k(i)-c(A)_i$ is the sum of $a_{ij}$ over those $j$ with $a_{ij}>0$ and
$k\notin U_{ij}$, minus the sum over those with $a_{ij}>0$ and $k\in U_{ij}$. This is the $k$th coordinate of
$\sum_{j}\eta(t_{ij})$. Hence $x_1,\dots,x_Q$ satisfy~\eqref{eq:HQ} with $v=c(A)$, which
proves~\eqref{eq:HQinverse}.

\textsc{Step 5: conclusion.} Let $\mathcal P$ be the family of orbit tests~\eqref{eq:orbit-test} for the action
of $S_Q$ on $\mathcal C$, which is separating by the proof of Theorem~\ref{thm:groups}, and let $E,R$ be the
maps of Proposition~\ref{prop:reconstruction}, with $J=\Gamma\sqcup\mathcal P$. Set
\[
\Phi(A)=(c(A),E(T(A)))\in V\oplus_\infty\ell_\infty(J),
\qquad \Psi(v,z)=\Theta(v,R(z)),
\]
where $V\oplus_\infty\ell_\infty(J)$ carries the norm $\max(\|v\|,\|z\|)$ and is thus isometric to
$\ell_\infty(I\sqcup J)$.
Since $c$, $T$, and $E$ are $1$-Lipschitz, $\Lip(\Phi)\le1$. Since $\Lip(R)\le3$, \eqref{eq:HQlip} gives
\[
\G(\Psi(v,z),\Psi(v',w))
\le\|v-v'\|+3(Q-1)\|z-w\|
\le(3Q-2)\max(\|v-v'\|,\|z-w\|).
\]
By~\eqref{eq:HQinverse} and $R\circ E=\mathrm{id}$, we have $\Psi\circ\Phi=\mathrm{id}$.
Since $\ell_\infty(I\sqcup J)$ is an absolute $1$-Lipschitz retract, Lemma~\ref{lem:transfer} proves the claim.
\end{proof}

\section{Finite subset spaces}\label{sec:finite}

For $n\ge3$, Theorem~\ref{thm:finite} cannot be obtained from
Theorem~\ref{thm:main} by Lemma~\ref{lem:transfer} with $\Psi$ the forgetful
map $\Q_n(X)\to X(n)$, $\lb x_1,\dots,x_n\rb\mapsto\{x_1,\dots,x_n\}$. This
map is $1$-Lipschitz, but it has no continuous right inverse if $X$ contains
two distinct non-isolated points $a$ and $b$. Indeed, a right inverse $s$ must
send every $n$-point set to the unordered $n$-tuple of its elements. Letting
$n-2$ additional points tend to $a$, we find that a continuous $s$ satisfies
$s(\{a,b\})=\lb a,\dots,a,b\rb$, with $a$ repeated $n-1$ times; letting them
tend to $b$ instead gives $s(\{a,b\})=\lb a,b,\dots,b\rb$. We therefore modify
the proofs of Sections~\ref{sec:colors} and~\ref{sec:allQ}, replacing the
group action by unordered sets of sign vectors, which record which sign
patterns occur but not how many times they occur.

Let $\Gamma$ be a set and $n\ge1$. A \emph{sign set} on a subset $S\subset\Gamma$
is a nonempty set $U\subset\{-1,1\}^S$ with $|U|\le n$; when $S=\varnothing$,
the only sign set is the singleton $\{-1,1\}^\varnothing$. For $H\subset S$ we
write $U|_H=\{u|_H:u\in U\}$, which is a sign set on $H$. A sign set on $S$ is
determined by its restrictions to finite subsets of $S$:
\begin{equation}\label{eq:finite-det}
U|_H=U'|_H\ \text{ for all nonempty finite } H\subset S
\quad\Longrightarrow\quad U=U'.
\end{equation}
Indeed, if $U\ne U'$, then after interchanging $U$ and $U'$ we may choose
$u\in U\setminus U'$. For each $v\in U'$ choose $\gamma_v\in S$ with
$u_{\gamma_v}\ne v_{\gamma_v}$. The set $H=\{\gamma_v:v\in U'\}$ is finite and
nonempty, and $u|_H\notin U'|_H$.

Let $\mathcal B_n(\Gamma)$ be the set of all nonempty sets
$A\subset\ell_\infty(\Gamma)$ with $|A|\le n$ for which there is a family
$a=(a_\gamma)_{\gamma\in\Gamma}$ of nonnegative numbers such that
\[
|u_\gamma|=a_\gamma\qquad(u\in A,\ \gamma\in\Gamma).
\]
Thus $A$ consists of vertices of a rectangular box centered at the origin,
possibly degenerate, and not every vertex needs to occur. We equip
$\mathcal B_n(\Gamma)\subset\ell_\infty(\Gamma)(n)$ with the Hausdorff distance
$d_H$. The family $a$ is the \emph{amplitude} of $A$. With
$S=\{\gamma:a_\gamma>0\}$, the \emph{sign set} of $A$ is
\[
U_A=\bigl\{(u_\gamma/a_\gamma)_{\gamma\in S}:u\in A\bigr\}.
\]
Conversely, a bounded family $a$ of nonnegative numbers and a sign set $U$ on
$S=\{\gamma:a_\gamma>0\}$ determine the element
$\{(a_\gamma u_\gamma)_{\gamma\in S}\oplus0:u\in U\}$ of $\mathcal B_n(\Gamma)$,
with $0$ in the coordinates outside $S$. As with the colors in
Section~\ref{sec:colors}, signs are defined only where the amplitude is
positive.

A \emph{test} $p$ consists of a nonempty finite set $H_p\subset\Gamma$ and a
sign vector $\theta_p\in\{-1,1\}^{H_p}$. For a sign set $U$ on a set
containing $H_p$, let $\sigma_p(U)=1$ if $\theta_p\in U|_{H_p}$, and
$\sigma_p(U)=-1$ otherwise. As before, $p$ is \emph{supported} in $S$ if
$H_p\subset S$. Let $\mathcal P$ be the set of all tests. For a finite set
$H$, the values of $\sigma_p(U)$ on the tests with $H_p=H$ determine $U|_H$.

\begin{theorem}\label{thm:boxes}
For every $n\ge1$ and every set $\Gamma$, the space $\mathcal B_n(\Gamma)$ is
an absolute $3$-Lipschitz retract. More precisely, let
$J=\Gamma\sqcup\mathcal P$. There are maps
\[
E:\mathcal B_n(\Gamma)\to\ell_\infty(J),\qquad
R:\ell_\infty(J)\to\mathcal B_n(\Gamma)
\]
such that $E$ is $1$-Lipschitz, $R\circ E=\mathrm{id}$, and $\Lip(R)\le3$.
\end{theorem}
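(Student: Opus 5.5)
We follow the five steps of the proof of Proposition~\ref{prop:reconstruction}. The colorings and the group are replaced by sign sets, and the test $\sigma_p(U)$ now means ``$\theta_p\in U|_{H_p}$''. The separation property is supplied by~\eqref{eq:finite-det}, together with the remark that the tests with $H_p=H$ determine $U|_H$.

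\emph{Encoding.} For $A\in\mathcal B_n(\Gamma)$ with amplitude $a$ and sign set $U_A$, set
\[
E(A)_\gamma=a_\gamma,\qquad E(A)_p=\sigma_p(U_A)\min_{\gamma\in H_p}a_\gamma .
\]
If some $a_\gamma$ with $\gamma\in H_p$ vanishes, the value is $0$, so undefined signs cause no ambiguity.

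The amplitude entries are $1$-Lipschitz: if $d_H(A,B)\le\delta$, each $u\in A$ has some $v\in B$ with $|a_\gamma-b_\gamma|\le\big||u_\gamma|-|v_\gamma|\big|\le\delta$. For a test entry, suppose $\sigma_p(U_A)\ne\sigma_p(U_B)$, say $\theta_p\in U_A|_{H_p}$ but $\theta_p\notin U_B|_{H_p}$. Take $u\in A$ realizing $\theta_p$ and $v\in B$ with $\|u-v\|\le\delta$. Then $v$ has a different sign pattern on $H_p$, so some $\gamma\in H_p$ has $u_\gamma,v_\gamma$ of opposite signs (or $b_\gamma=0$). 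Hence $a_\gamma+b_\gamma\le\delta$, and the sum of the two minima is at most $\delta$.

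\emph{Consistency and threshold.} Define the defect $\Delta(z,p)=\min_{H_p}z_\gamma-|z_p|$. Call a finite $F$ inconsistent for $z$ if no sign set $U$ on $F$ satisfies $\sigma_p(U)=\sign z_p$ for all $p$ supported in $F$ with $z_p\ne0$. Note that $|U|\le n$ is part of being a sign set, so this is where the bound $n$ enters. Define $\tau(z)$ by~\eqref{eq:threshold-general}, and set $a_\gamma=\pos{z_\gamma-\tau(z)}$ and $S=\{a_\gamma>0\}$.

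The proof that $\tau$ is $2$-Lipschitz is verbatim the same as in Step~3: a sign set consistent for $w$ but not for $z$ fails some test with $z_pw_p\le0$.

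\emph{Reconstruction on $S$.} For every test supported in $S$ we have $z_p\ne0$. All such constraints must be satisfied by one sign set $U$ on $S$, and this is the step I expect to be the main obstacle. The compactness argument must now run in the space of sign sets on $S$ of size at most $n$, i.e.\ in the quotient of $(\{-1,1\}^S)^n$ by $S_n$. The product $(\{-1,1\}^S)^n$ is compact, and each constraint ``$\theta_p\in U|_{H_p}$'' or ``$\theta_p\notin U|_{H_p}$'' depends only on coordinates in $H_p$, so it is clopen there. A consistent sign set on a finite $F\supset$ supports lifts to an $n$-tuple by repeating elements, extended arbitrarily off $F$. Compactness then yields an $n$-tuple whose underlying set satisfies all constraints. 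Uniqueness follows because for each finite $H\subset S$ the tests with $H_p=H$ are all supported in $S$, hence determine $U|_H$; by~\eqref{eq:finite-det} they determine $U$.

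Define $R(z)$ as the element of $\mathcal B_n(\Gamma)$ with amplitude $a$ and sign set $U$. For $z=E(A)$ the defects are $\le0$ and $U_A$ witnesses consistency, so $\tau=0$ and $R(E(A))=A$.

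\emph{Lipschitz bound.} Let $\varepsilon=\|z-w\|$, so $|a_\gamma-b_\gamma|\le3\varepsilon$, and put $S_0=\{a_\gamma+b_\gamma>3\varepsilon\}$. The Lemma~\ref{lem:minima} argument shows that $\sign z_p=\sign w_p$ for every test supported in $S_0$. Hence $U|_{S_0}=U'|_{S_0}$, again by~\eqref{eq:finite-det} and the remark about tests with fixed $H$.

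Now bound the Hausdorff distance. Given $u\in R(z)$, pick $v\in R(w)$ whose sign vector agrees with that of $u$ on $S_0$. On $S_0$ the coordinates have equal signs, so $|u_\gamma-v_\gamma|=|a_\gamma-b_\gamma|\le3\varepsilon$. Off $S_0$, $|u_\gamma-v_\gamma|\le a_\gamma+b_\gamma\le3\varepsilon$. By symmetry this gives $d_H\le3\varepsilon$.

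Finally, since $E$ is $1$-Lipschitz, $R\circ E=\mathrm{id}$, and $\Lip(R)\le3$, Lemma~\ref{lem:transfer} with the absolute $1$-Lipschitz retract $\ell_\infty(J)$ shows that $\mathcal B_n(\Gamma)$ is an absolute $3$-Lipschitz retract.
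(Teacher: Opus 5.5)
Your proposal is correct and follows the paper's proof essentially step for step: the same encoding, verbatim reuse of the threshold $\tau$ and its $2$-Lipschitz bound, the same compactness argument over $n$-tuples in $\{-1,1\}^{\{1,\dots,n\}\times S}$ with uniqueness from \eqref{eq:finite-det}, and the same Step~5 pairing of points with equal signs on $S_0$. The one detail you gloss over is in checking $R(E(A))=A$ when a finite $F$ is not contained in the support of the amplitude of $A$: there the consistency witness is not $U_A$ itself, but the restrictions to $F$ of the elements of $U_A$, extended (say by $1$) off that support, which works because $z_p\ne0$ forces $H_p$ into the support.
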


\begin{proof}
The second statement implies the first by Lemma~\ref{lem:transfer}. We follow
the proof of Proposition~\ref{prop:reconstruction}, with sign sets in place of
colorings modulo $G$. Steps~2 and~3 carry over verbatim; we describe the
changes in Steps~1, 4, and~5.

\textsc{Step 1: encoding.} For $A\in\mathcal B_n(\Gamma)$ with amplitude $a$
and $S=\{\gamma:a_\gamma>0\}$, define
\[
E(A)_\gamma=a_\gamma,\qquad
E(A)_p=
\begin{cases}
\sigma_p(U_A)\min_{\gamma\in H_p}a_\gamma,& H_p\subset S,\\
0,&\text{otherwise}.
\end{cases}
\]
In both cases $|E(A)_p|=\min_{\gamma\in H_p}a_\gamma$. Let
$B\in\mathcal B_n(\Gamma)$ have amplitude $b$, and let $\delta=d_H(A,B)$.
For every $\gamma$, choosing $u\in A$ and $v\in B$ with $\|u-v\|\le\delta$
gives $|a_\gamma-b_\gamma|\le|u_\gamma-v_\gamma|\le\delta$. Hence
$\bigl||E(A)_p|-|E(B)_p|\bigr|\le\delta$, which proves
$|E(A)_p-E(B)_p|\le\delta$ unless $E(A)_p$ and $E(B)_p$ have opposite signs.
In that case $H_p$ lies in the supports of both amplitudes, and after
interchanging $A$ and $B$ we have $\theta_p\in U_A|_{H_p}$ and
$\theta_p\notin U_B|_{H_p}$. Choose $u\in A$ whose signs on $H_p$ are given
by $\theta_p$, and $v\in B$ with $\|u-v\|\le\delta$. The signs of $v$ on $H_p$
differ from $\theta_p$ at some $\beta\in H_p$, where $u_\beta$ and
$v_\beta$ have opposite signs. Therefore
\[
|E(A)_p-E(B)_p|=\min_{\gamma\in H_p}a_\gamma+\min_{\gamma\in H_p}b_\gamma
\le a_\beta+b_\beta=|u_\beta-v_\beta|\le\delta.
\]

\textsc{Steps 2 and 3: consistency and Lipschitz bound.} A nonempty finite set
$F\subset\Gamma$ is \emph{inconsistent} for $z\in\ell_\infty(J)$ if no sign
set $U$ on $F$ satisfies
\begin{equation}\label{eq:constraints-boxes}
\sigma_p(U)=\sign z_p\quad\text{whenever $p$ is supported in $F$ and
$z_p\ne0$.}
\end{equation}
With this definition, the defect $\Delta(z,p)$, the threshold $\tau(z)$, the
amplitudes $a_\gamma=\pos{z_\gamma-\tau(z)}$, and the set
$S=\{\gamma:a_\gamma>0\}$ are defined as in Step~2 of the proof of
Proposition~\ref{prop:reconstruction}. The arguments given there show
that~\eqref{eq:edge-general} holds for every test supported in $S$, that
every finite subset of $S$ is consistent, and, as in Step~3, that $\tau$ is
$2$-Lipschitz.

\textsc{Step 4: reconstruction.} By~\eqref{eq:edge-general}, $z_p\ne0$ for
every test $p$ supported in $S$. Represent a sign set on $S$ by an $n$-tuple
of elements of $\{-1,1\}^S$, with repetitions allowed; such $n$-tuples form
the compact space $\{-1,1\}^{\{1,\dots,n\}\times S}$. For a test $p$
supported in $S$, the constraint~\eqref{eq:constraints-boxes} on the set of
entries of an $n$-tuple depends only on the coordinates in
$\{1,\dots,n\}\times H_p$, and therefore defines a closed subset. Finitely
many such constraints involve only a finite union $F$ of supports. Since $F$
is consistent, they are satisfied by a sign set on $F$, which we represent by
an $n$-tuple whose entries we extend arbitrarily to $S$. By compactness, some
$n$-tuple satisfies all the constraints, and its set of entries $U$ is a sign
set on $S$. Since $z_p\ne0$ for every test supported in $S$, the
constraints~\eqref{eq:constraints-boxes} determine $U|_H$ for every finite
$H\subset S$, and~\eqref{eq:finite-det} shows that $U$ is unique. Let $R(z)$
be the element of $\mathcal B_n(\Gamma)$ with amplitude $a$ and sign set $U$.

Suppose that $z=E(A)$ for some $A\in\mathcal B_n(\Gamma)$. Then
$\Delta(z,p)\le0$ for every $p$. Moreover, $z_p\ne0$ forces $H_p$ into the
support of the amplitude of $A$. Hence, for every finite $F\subset\Gamma$,
the restrictions to $F$ of the elements of $U_A$, extended by $1$ outside
this support, form a sign set on $F$ that
satisfies~\eqref{eq:constraints-boxes}. Thus no set is inconsistent for $z$,
$\tau(z)=0$, the amplitudes are unchanged by $R$, and $R(E(A))=A$ by the
uniqueness of $U$.

\textsc{Step 5: conclusion.} Let $\varepsilon=\|z-w\|$, and let $a,U$ and
$b,U'$ be the amplitudes and sign sets of $R(z)$ and $R(w)$. As in Step~5 of
the proof of Proposition~\ref{prop:reconstruction}, we have
$|a_\gamma-b_\gamma|\le3\varepsilon$, and every test $p$ supported in
$S_0=\{\gamma:a_\gamma+b_\gamma>3\varepsilon\}$ satisfies
$\sigma_p(U)=\sign z_p=\sign w_p=\sigma_p(U')$. Hence $U|_H=U'|_H$ for every
finite $H\subset S_0$, and~\eqref{eq:finite-det} gives $U|_{S_0}=U'|_{S_0}$.
Thus every point of $R(z)$ has the same signs on $S_0$ as some point of
$R(w)$, and vice versa. For two such points, the coordinate differences are
at most $|a_\gamma-b_\gamma|\le3\varepsilon$ on $S_0$ and at most
$a_\gamma+b_\gamma\le3\varepsilon$ outside $S_0$. Therefore
$d_H(R(z),R(w))\le3\varepsilon$.
\end{proof}

\begin{remark}
The proofs of Proposition~\ref{prop:reconstruction} and
Theorem~\ref{thm:boxes} share the following structure. A point of
$\Y_G(\Gamma)$ is determined by its amplitudes and the $G$-orbit of its
coloring on the set where the amplitudes are positive, and a point of
$\mathcal B_n(\Gamma)$ is determined by its amplitude and its sign set. In
both cases, the distance between two points with amplitudes $a$ and $b$ is
the least $\delta\ge\sup_\gamma|a_\gamma-b_\gamma|$ such that the two
colorings modulo $G$, respectively the two sign sets, agree on
$\{\gamma:a_\gamma+b_\gamma>\delta\}$. For $D$ this follows from the
definitions of $D$ and $d_{\T}$. For $d_H$, two points at distance at most $\delta$
cannot have opposite signs at a coordinate where $a_\gamma+b_\gamma>\delta$;
conversely, if the sign sets agree on that set, then pairing points with
equal signs there shows that $d_H\le\delta$. Steps~1 and~5 of both proofs use
only this description of the metric.
\end{remark}

For a nonempty finite set $A\subset\R$, let $c(A)=(\min A+\max A)/2$ be its
midrange, and define
\[
h_A(t)=
\begin{cases}
\dist(t,A),&\min A\le t\le\max A,\\
0,&\text{otherwise},
\end{cases}
\]
\[
\phi_{A,x}(t)=\sign(x-t)\,h_A(t)\quad(x\in A,\ t\in\R).
\]
The value of $\sign0$ is immaterial, since $h_A(x)=0$. On each gap
$[s,s']$ between consecutive points of $A$, the function $h_A$ is the tent
$\min(t-s,s'-t)$, and $h_A=0$ outside these gaps; in particular, $h_A$ is
continuous and $\phi_{A,x}$ is bounded. We use the supremum norm for bounded
functions on $\R$.

\begin{lemma}\label{lem:profiles}
Let $A,B\subset\R$ be nonempty finite sets, $x\in A$, and $y\in B$.
\begin{enumerate}
\item[(a)] If $d_H(A,B)\le\varepsilon$ and $|x-y|\le\varepsilon$, then
$|c(A)-c(B)|\le\varepsilon$ and $\|\phi_{A,x}-\phi_{B,y}\|\le\varepsilon$.
\item[(b)] If $|A|,|B|\le n$, then
\begin{equation}\label{eq:scalar-inverse}
|x-y|\le|c(A)-c(B)|+2(n-1)\|\phi_{A,x}-\phi_{B,y}\|.
\end{equation}
\end{enumerate}
\end{lemma}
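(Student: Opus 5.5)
The plan is to prove (a) by expressing $h_A$ through quantities that are $1$-Lipschitz in the Hausdorff distance, and to prove (b) by measuring the interval between $x$ and $y$ against the gaps of $A$ and of $B$.

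\textbf{Part (a).} The maps $A\mapsto\min A$ and $A\mapsto\max A$ are $1$-Lipschitz for $d_H$, so $c$ is as well. For the profiles, first write
\[
h_A(t)=\pos{\min\bigl(\dist(t,A),\,t-\min A,\,\max A-t\bigr)}.
\]
This holds because for $t\in[\min A,\max A]$ we have $\dist(t,A)\le\min(t-\min A,\max A-t)$, while outside the hull one of the last two terms is negative. Each of the three terms changes by at most $\varepsilon$ when $A$ is replaced by $B$, so $|h_A(t)-h_B(t)|\le\varepsilon$.

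Now fix $t$. If $\sign(x-t)=\sign(y-t)$, the bound on $|\phi_{A,x}(t)-\phi_{B,y}(t)|$ follows directly. Otherwise $t$ lies in the closed interval between $x$ and $y$. Then $h_A(t)\le\dist(t,A)\le|x-t|$ and $h_B(t)\le|y-t|$, so
\[
|\phi_{A,x}(t)-\phi_{B,y}(t)|\le h_A(t)+h_B(t)\le|x-y|\le\varepsilon.
\]

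\textbf{Part (b).} Let $\delta=\|\phi_{A,x}-\phi_{B,y}\|$, and assume without loss of generality that $x<y$. For $t\in(x,y)$ we have $\phi_{A,x}(t)=-h_A(t)\le0$ and $\phi_{B,y}(t)=h_B(t)\ge0$, hence $h_A(t)+h_B(t)\le\delta$ on $(x,y)$.

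We first bound how much of $(x,y)$ the gaps of $A$ can cover. The set $(x,y)\cap[\min A,\max A]=(x,\min(y,\max A))$ is covered by the gaps of $A$ that meet it. These gaps all lie to the right of $x\in A$, so there are at most $n-1$ of them. Take one such gap $[s,s']$. If its intersection with $(x,y)$ contains the midpoint of the gap, then the tent height there is $(s'-s)/2\le\delta$, so the intersection has length at most $2\delta$. Otherwise the intersection lies on one side of the midpoint, the tent is at most $\delta$ at its inner end, and its length is at most $\delta$. Either way each gap contributes at most $2\delta$, so
\[
y-x\le\pos{y-\max A}+2(n-1)\delta .
\]
By the same argument applied to $B$, whose gaps meeting $(\max(x,\min B),y)$ lie left of $y\in B$,
\[
y-x\le\pos{\min B-x}+2(n-1)\delta .
\]

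Next we convert these one-sided terms into the midrange. Since $y\le\max B$ and $x\ge\min A$, we get $\pos{y-\max A}\le\pos{\max B-\max A}$ and $\pos{\min B-x}\le\pos{\min B-\min A}$. Averaging the two displayed inequalities gives
\[
y-x\le\tfrac12\bigl(\pos{\max B-\max A}+\pos{\min B-\min A}\bigr)+2(n-1)\delta .
\]

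\textbf{Expected obstacle.} The main difficulty is this averaging step. The right-hand side involves positive parts, whereas $c(B)-c(A)$ is half the signed sum $(\max B-\max A)+(\min B-\min A)$. If one of the two differences is negative, the averaged bound does not reduce directly to $|c(A)-c(B)|$. In that case I would use the corresponding single inequality instead of the average. For example, if $\max B<\max A$, then $y<\max A$, so $(x,y)$ lies entirely inside the hull of $A$ and $y-x\le2(n-1)\delta$ outright. The case $\min B<\min A$ is symmetric. If both differences are nonnegative, the averaged bound equals $c(B)-c(A)$, which gives \eqref{eq:scalar-inverse}.
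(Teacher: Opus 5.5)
Your proof is correct and follows essentially the same route as the paper. In (a) you use the same expression of $h_A$ through $\dist(t,A)$, $t-\min A$ and $\max A-t$, together with the same sign-case argument. In (b) you get the two one-sided bounds $y-x\le\pos{y-\max A}+2(n-1)\delta$ and $y-x\le\pos{\min B-x}+2(n-1)\delta$ from the same count of at most $n-1$ gaps, each of length at most $2\delta$. Your final case split on the signs of $\max B-\max A$ and $\min B-\min A$, with the averaging, is only a rearrangement of the paper's split on whether $\max A\ge y$ or $\min B\le x$, and it fully resolves the obstacle you anticipated.
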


\begin{proof}
(a) The extreme points of $A$ and $B$ differ by at most $d_H(A,B)$, and so do
$\dist(t,A)$ and $\dist(t,B)$. Since
\[
h_A(t)=\min\bigl\{\dist(t,A),\pos{t-\min A},\pos{\max A-t}\bigr\},
\]
it follows that $|h_A(t)-h_B(t)|\le\varepsilon$ for all $t$. This gives
$|\phi_{A,x}(t)-\phi_{B,y}(t)|\le\varepsilon$ when $\sign(x-t)=\sign(y-t)$.
Otherwise $t$ lies in the closed interval between $x$ and $y$, and since
$h_A(t)\le|x-t|$ and $h_B(t)\le|y-t|$,
\[
|\phi_{A,x}(t)-\phi_{B,y}(t)|\le h_A(t)+h_B(t)\le|x-t|+|y-t|=|x-y|
\le\varepsilon.
\]

(b) Interchanging $(A,x)$ with $(B,y)$ if necessary, we may assume that
$x<y$. Let $\varepsilon=\|\phi_{A,x}-\phi_{B,y}\|$. For $x<t<y$ the values
$\phi_{A,x}(t)=-h_A(t)$ and $\phi_{B,y}(t)=h_B(t)$ have opposite signs, so
$h_A+h_B\le\varepsilon$ on $(x,y)$, and by continuity on $[x,y]$.

Let $m=\min(y,\max A)$. The points of $A$ divide $[x,m]$ into intervals,
each of which is contained in a gap $[s,s']$ of $A$ and has its left endpoint
at $s$. Since every such $s$ is a point of $A$ with $x\le s<\max A$, there
are at most $n-1$ intervals. Each of them has length at most
$2\varepsilon$: if $[s,s+L]\subset[s,s']$ and $h_A\le\varepsilon$ on
$[s,s+L]$, then either $L\le(s'-s)/2$ and $L=h_A(s+L)\le\varepsilon$, or
$(s'-s)/2=h_A((s+s')/2)\le\varepsilon$ and $L\le s'-s\le2\varepsilon$. Hence
$m-x\le2(n-1)\varepsilon$. The same argument applied to $B$, with intervals
measured from their right endpoints, shows that $m'=\max(x,\min B)$
satisfies $y-m'\le2(n-1)\varepsilon$.

If $\max A\ge y$, then $m=y$ and $y-x\le2(n-1)\varepsilon$; the same holds if
$\min B\le x$. Otherwise $m=\max A$ and $m'=\min B$. Since $\min A\le x$ and
$\max B\ge y$,
\[
2\bigl(c(B)-c(A)\bigr)\ge(\min B+y)-(x+\max A)
=2(y-x)-(m-x)-(y-m')\ge2(y-x)-4(n-1)\varepsilon,
\]
which proves~\eqref{eq:scalar-inverse}.
\end{proof}

\begin{remark}\label{rem:sharp}
For every $n\ge2$, the factor $2(n-1)$ in~\eqref{eq:scalar-inverse} cannot be reduced. Let
$A=\{0,2,\dots,2n-2\}$, $x=0$, $B=A+L$ with $L>2n-2$, and $y=L+2n-2$. The
functions $h_A$ and $h_B$ have disjoint supports and maximum $1$, so
$\|\phi_{A,x}-\phi_{B,y}\|=1$, while $|x-y|=|c(A)-c(B)|+2(n-1)$.
\end{remark}

\begin{proposition}\label{prop:finite}
For every $n\ge1$ and every index set $I$, the space $\ell_\infty(I)(n)$ is an
absolute $(6n-5)$-Lipschitz retract.
\end{proposition}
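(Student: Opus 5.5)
The plan mirrors the proof of Proposition~\ref{prop:allQ}. With $V=\ell_\infty(I)$, we construct a $1$-Lipschitz map $\Phi$ from $V(n)$ into an $\ell_\infty$ space, together with a $(6n-5)$-Lipschitz left inverse $\Psi$. Lemma~\ref{lem:transfer} then finishes the argument. The case $n=1$ is trivial, since $V(1)=V$, so assume $n\ge2$.

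\textbf{Encoding.} For $A\in V(n)$ and $i\in I$, let $A(i)=\{x(i):x\in A\}\subset\R$, which has at most $n$ points.
\begin{itemize}
\item[(i)] The midrange is $c(A)\in V$, with $c(A)_i=c(A(i))$.
\item[(ii)] The profile is a set of vertices of a box. Let $\Gamma=I\times\R$. Each $x\in A$ is sent to $u^x\in\ell_\infty(\Gamma)$, where $u^x_{(i,t)}=\phi_{A(i),x(i)}(t)$. Put $P(A)=\{u^x:x\in A\}$.
\end{itemize}
For fixed $(i,t)$, all $|u^x_{(i,t)}|$ equal $h_{A(i)}(t)$. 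Hence $P(A)\in\mathcal B_n(\Gamma)$ with amplitude $a_{(i,t)}=h_{A(i)}(t)$, bounded by $\operatorname{diam}A$.

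Lemma~\ref{lem:profiles}(a) makes both maps $1$-Lipschitz. Let $\delta=d_H(A,B)$. Then $d_H(A(i),B(i))\le\delta$, which gives the bound for $c$. For $P$, given $x\in A$, choose $y\in B$ with $\|x-y\|\le\delta$. Then $|x(i)-y(i)|\le\delta$ for every $i$, so Lemma~\ref{lem:profiles}(a) gives $\|u^x-u^y\|\le\delta$ coordinatewise; the same works symmetrically. Thus $d_H(P(A),P(B))\le\delta$.

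Set $\Phi(A)=(c(A),E(P(A)))\in V\oplus_\infty\ell_\infty(J)$, using $E$ from Theorem~\ref{thm:boxes}. This $\Phi$ is $1$-Lipschitz.

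\textbf{Decoding.} Take $(v,z)$ and let $W=R(z)\in\mathcal B_n(\Gamma)$. Each $w\in W$ should produce one point of $V$; the natural way is to invert the profile via the inequality in Lemma~\ref{lem:profiles}(b). I would define a scalar decoder as follows. For a real number $m$ and a bounded function $\psi$ on $\R$, let $\rho(m,\psi)$ be the unique $s$ solving an explicit monotone equation. On exact data $\psi=\phi_{A,x}$, $m=c(A)$ it must return $x$, and it must satisfy
\[
|\rho(m,\psi)-\rho(m',\psi')|\le|m-m'|+2(n-1)\|\psi-\psi'\|
\]
for arbitrary inputs.

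The main obstacle is that Lemma~\ref{lem:profiles}(b) only bounds distances between exact profiles of sets of size at most $n$, whereas $R$ outputs arbitrary box vertices. One fix is to first show that the set of exact pairs $\{(c(A),\phi_{A,x})\}$, with the map to $x$, is $1$-Lipschitz in the norm $\max(|m-m'|,2(n-1)\|\psi-\psi'\|)$; that is exactly (b). Then extend by McShane's formula to a real-valued function $\rho$ on all of $\R\times\ell_\infty(\R)$ with the same Lipschitz bound. Being scalar, the extension costs nothing, and this avoids an explicit formula.

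Define $\Psi(v,z)=\bigl\{\bigl(\rho(v_i,w_{(i,\cdot)})\bigr)_{i\in I}:w\in W\bigr\}$. This is a set of at most $n$ points. Boundedness in $i$ follows by comparing with the exact datum $(0,0)\mapsto0$, taking $A$ a singleton at $0$.

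\textbf{Lipschitz bound and conclusion.} Let $\varepsilon=\max(\|v-v'\|,\|z-z'\|)$. Theorem~\ref{thm:boxes} gives $d_H(R(z),R(z'))\le3\varepsilon$. Pairing $w$ with $w'$ accordingly, the coordinatewise estimate yields
\[
d_H\bigl(\Psi(v,z),\Psi(v',z')\bigr)\le\varepsilon+2(n-1)\cdot3\varepsilon=(6n-5)\varepsilon.
\]
Moreover $\Psi\circ\Phi=\mathrm{id}$, because $R\circ E=\mathrm{id}$ and $\rho$ agrees with $x(i)$ on exact data. Lemma~\ref{lem:transfer}, applied with $\ell_\infty(I\sqcup J)$ as the target, then proves the proposition.
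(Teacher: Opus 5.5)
Your proposal is correct and is essentially the paper's proof: the same midrange and profile encoding into $\mathcal B_n(I\times\R)$, the maps $E,R$ of Theorem~\ref{thm:boxes}, and a McShane extension of the exact-data decoder justified by Lemma~\ref{lem:profiles}(b). The only variation is that you extend a scalar decoder on $\R\times\ell_\infty(\R)$ and apply it in each coordinate, whereas the paper extends $(c(A),u_{A,x})\mapsto x$ directly to a map $\Theta\colon V\times\ell_\infty(\Gamma)\to V$. One slip to fix: the McShane extension must use the sum metric $|m-m'|+2(n-1)\|\psi-\psi'\|$, as in your display and final estimate, not $\max(|m-m'|,2(n-1)\|\psi-\psi'\|)$. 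Lemma~\ref{lem:profiles}(b) is a sum estimate, and the example in Remark~\ref{rem:sharp} shows that the exact-data map is not $1$-Lipschitz for the maximum.
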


Before proving Proposition~\ref{prop:finite}, let us derive Theorem~\ref{thm:finite} from it.

\begin{proof}[Proof of Theorem~\ref{thm:finite}]
Embed $X$ isometrically into $V=\ell_\infty(I)$ for a suitable index set $I$,
and choose a $\lambda$-Lipschitz retraction $\pi\colon V\to X$. By
Section~\ref{sec:prelim}, $A\mapsto\pi(A)$ is a $\lambda$-Lipschitz
retraction of $V(n)$ onto $X(n)$. Proposition~\ref{prop:finite} and
Lemma~\ref{lem:transfer} complete the proof.
\end{proof}

\begin{proof}[Proof of Proposition~\ref{prop:finite}]
The cases $n=1$ and $I=\varnothing$ are immediate, so let $n\ge2$ and
$I\ne\varnothing$. Let $V=\ell_\infty(I)$ and $\Gamma=I\times\R$.

\textsc{Step 1: encoding.} For $A\in V(n)$ and $i\in I$, let
$A(i)=\{x(i):x\in A\}\subset\R$, where $x(i)$ is the $i$th coordinate of $x$.
The coordinatewise midrange $c(A)\in V$ is given by $c(A)_i=c(A(i))$. For
$x\in A$, define $u_{A,x}\in\ell_\infty(\Gamma)$ by
\[
u_{A,x}(i,t)=\phi_{A(i),x(i)}(t).
\]
The norms of $c(A)$ and $u_{A,x}$ are at most $\max_{x\in A}\|x\|$. Since
$|u_{A,x}(i,t)|=h_{A(i)}(t)$ does not depend on $x$, the set
$T(A)=\{u_{A,x}:x\in A\}$ belongs to $\mathcal B_n(\Gamma)$.

\textsc{Step 2: Lipschitz bound.} Let $A,B\in V(n)$ and $\delta=d_H(A,B)$.
Then $d_H(A(i),B(i))\le\delta$ for every $i$, so $\|c(A)-c(B)\|\le\delta$ by
Lemma~\ref{lem:profiles}(a). For $x\in A$ choose $y\in B$ with $\|x-y\|\le\delta$; Lemma~\ref{lem:profiles}(a), applied in each coordinate, gives $\|u_{A,x}-u_{B,y}\|\le\delta$. Thus every element of $T(A)$ lies within $\delta$ of $T(B)$. Interchanging $A$ and $B$ gives the reverse statement, so $d_H(T(A),T(B))\le\delta$. Thus $c$ and $T$ are $1$-Lipschitz.

\textsc{Step 3: decoding points.} Lemma~\ref{lem:profiles}(b), applied in
each coordinate, shows that
\begin{equation}\label{eq:point-decoder}
\|x-y\|\le\|c(A)-c(B)\|+2(n-1)\|u_{A,x}-u_{B,y}\|,
\qquad A,B\in V(n),\ x\in A,\ y\in B.
\end{equation}
Equip $V\times\ell_\infty(\Gamma)$ with the metric
$\rho((v,u),(v',u'))=\|v-v'\|+2(n-1)\|u-u'\|$. By~\eqref{eq:point-decoder},
the map $(c(A),u_{A,x})\mapsto x$ is well defined and $1$-Lipschitz on the
set $\{(c(A),u_{A,x}):A\in V(n),\ x\in A\}$. By the McShane formula
(Section~\ref{sec:prelim}), it extends to a $1$-Lipschitz map
$\Theta\colon(V\times\ell_\infty(\Gamma),\rho)\to V$.

\textsc{Step 4: conclusion.} Let $E,R$ be the maps of
Theorem~\ref{thm:boxes}, with $J=\Gamma\sqcup\mathcal P$, and set
\[
\Phi(A)=(c(A),E(T(A)))\in V\oplus_\infty\ell_\infty(J),
\qquad
\Psi(v,z)=\{\Theta(v,u):u\in R(z)\}\in V(n).
\]
Since $c$, $T$, and $E$ are $1$-Lipschitz, $\Lip(\Phi)\le1$. Given $(v,z)$ and
$(v',w)$, each $u\in R(z)$ has some $u'\in R(w)$ with
$\|u-u'\|\le d_H(R(z),R(w))\le3\|z-w\|$, and then
\[
\|\Theta(v,u)-\Theta(v',u')\|\le\|v-v'\|+6(n-1)\|z-w\|
\le(6n-5)\max(\|v-v'\|,\|z-w\|).
\]
Thus every point of $\Psi(v,z)$ lies within $(6n-5)\max(\|v-v'\|,\|z-w\|)$ of $\Psi(v',w)$. Interchanging $(v,z)$ and $(v',w)$ gives the reverse statement, so $\Lip(\Psi)\le6n-5$. Since $R\circ E=\mathrm{id}$,
\[
\Psi(\Phi(A))=\{\Theta(c(A),u_{A,x}):x\in A\}=A.
\]
Since $V\oplus_\infty\ell_\infty(J)$ is isometric to $\ell_\infty(I\sqcup J)$,
an absolute $1$-Lipschitz retract, Lemma~\ref{lem:transfer} proves the claim.
\end{proof}

\section{Lower bounds and open questions}\label{sec:questions}

Throughout this section, $V=\ell_\infty(I)$ and $\Vt$ is its antipodal
quotient from Example~\ref{two-color-example}, with the metric $\dt$. The
\emph{half-difference} map
\[
T_2\colon\Q_2(V)\to\Vt,\qquad T_2(\lb x,y\rb)=\Bigl[\frac{x-y}{2}\Bigr],
\]
is well defined and $1$-Lipschitz. Under the
identification of Example~\ref{two-color-example}, $T_2$ is the map $T$ of Step~2
in the proof of Proposition~\ref{prop:allQ} for $Q=2$.

\begin{proposition}\label{prop:lower}
Suppose $|I|\ge2$. Neither $\Vt$ nor $\Q_Q(V)$,
for any $Q\ge2$, is an absolute $\lambda$-Lipschitz retract with $\lambda<2$.
\end{proposition}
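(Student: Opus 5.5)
The plan is to prove the bound for $\Vt$ first, and then transfer it to every $\Q_Q(V)$ by Lemma~\ref{lem:transfer}.

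\textbf{Reduction to $\Vt$.} The map $\iota\colon\Vt\to\Q_2(V)$, $[x]\mapsto\lb x,-x\rb$, is an isometric embedding. Indeed,
\[
\G(\lb x,-x\rb,\lb y,-y\rb)=\min\bigl(\|x-y\|,\|x+y\|\bigr)=\dt([x],[y]).
\]
Moreover $T_2\circ\iota=\mathrm{id}$ and $\Lip(T_2)\le1$. So by Lemma~\ref{lem:transfer}, if $\Q_2(V)$ is an absolute $\lambda$-Lipschitz retract, then so is $\Vt$.

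For $Q\ge3$ I would pad with a far point. Fix $p\in V$ with $\|p\|=R$ large. Send $\lb x,-x\rb$ to $\lb x,-x,p,\dots,p\rb$, restricted to $\|x\|\le K$ with $K\ll R$. For $R>4K$ an optimal matching pairs the copies of $p$ with each other, so this map is isometric on the bounded piece. A Lipschitz left inverse is obtained by removing the $Q-2$ coordinates closest to $p$ and then applying $T_2$. This is $1$-Lipschitz on a neighbourhood of the image; if needed it can be composed with a radial truncation, which is harmless in the limit $R\to\infty$. Since the obstruction below involves only a bounded configuration, a localized version of Lemma~\ref{lem:transfer} suffices. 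Thus everything reduces to showing that $\Vt$ is not an absolute $\lambda$-Lipschitz retract for any $\lambda<2$.

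\textbf{Obstruction for $\Vt$.} I would use the extension characterization with a one-point extension. Choose finitely many points $z_1,\dots,z_k\in\Vt$, supported on two coordinates $\{1,2\}\subset I$ and extended by $0$ elsewhere, together with radii $r_j>0$ satisfying $\dt(z_j,z_l)\le r_j+r_l$. Adjoin an abstract point $m$ with $d(m,z_j)=r_j$; this is a metric space. A $\lambda$-Lipschitz retraction would produce $w=[x]\in\Vt$ with $\dt(w,z_j)\le\lambda r_j$ for all $j$. The aim is a configuration in which no such $w$ exists when $\lambda<2$.

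The natural candidates exploit the failure of $[x]\mapsto\|x\|$-type "centering" across the antipodal identification. For example, take all $r_j=1$ and let the $z_j$ run around a "loop" such as
\[
[(1,0)],\ [(1,1)],\ [(0,1)],\ [(-1,1)],
\]
possibly together with their scaled copies. Pairwise distances are then at most $2$, while any representative $x$ of $w$ must lie within $\lambda$ of either $z_j$ or $-z_j$ for every $j$. The contradiction should come from tracking which sign is used: consecutive loop points force the same sign choice when $\lambda<2$, but going around the loop returns $z$ to $-z$.

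The main obstacle is choosing this configuration exactly. I would first try to make the sign-propagation argument rigorous: if $\|x-\epsilon z_j\|<2$ and $\|x-\epsilon' z_{j+1}\|<2$, then, with suitable amplitudes for the loop points, $\epsilon=\epsilon'$ is forced. I would then tune the amplitudes so that the pairwise constraints $\dt(z_j,z_l)\le2$ still hold. This would give the bound $\lambda\ge2$.
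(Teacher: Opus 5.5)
\textbf{Where the gap is.} Your reductions are sound and agree with the paper. For $Q=2$ the paper uses exactly $\Phi([b])=\lb b,-b\rb$, $\Psi=T_2$ and Lemma~\ref{lem:transfer}. For $Q\ge3$ it pads with $Q-2$ copies of the far point $\zeta=(10,0)$. It then shows directly that a $Q$-tuple within $r<2$ of all padded configurations has two entries near the origin, whose image under $T_2$ is a common center in $\Vt$. No limit $R\to\infty$ or truncation is needed; a fixed $R$ large compared with the configuration suffices.

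The genuine gap is the obstruction in $\Vt$ itself, which is the whole content of the proposition and which you leave open. Your concrete candidate also cannot work. The points $[(1,0)],[(1,1)],[(0,1)],[(-1,1)]$ all have norm $1$, so $[0]$ lies within $\dt$-distance $1$ of each of them, and the map extends to $m$ with constant $1$. For the same reason the sign propagation fails: $x=0$ satisfies both $\|x-(1,0)\|<2$ and $\|x+(1,1)\|<2$, so no common sign is forced. Since $\|x-z\|\le\lambda$ and $\|x+z'\|\le\lambda$ imply $\|z+z'\|\le2\lambda$, forcing a common sign for every $\lambda<2$ requires consecutive points with $\|z-z'\|\le2$ but $\|z+z'\|\ge4$. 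In particular, the points must be far from the origin compared with the radius; otherwise $[0]$ is a common center.

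\textbf{The missing configuration.} Your heuristic is correct once such amplitudes are chosen, and the paper realizes it with Biermann's configuration. Take $P_1=[(0,3)]$, $P_2=[(2,1)]$, $P_3=[(-2,1)]$, which are pairwise at $\dt$-distance $2$. Use the four-point space with $d(q_k,q_l)=2$ and $d(o,q_k)=1$. The lifts $(0,3),(2,1),(2,-1),(0,-3)$ form exactly your loop. Consecutive lifts are at distance $2$, each lift is at distance $4$ from the negative of the next, and the loop ends at the antipode of its start.

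The contradiction is then short. Suppose $\dt([p],P_k)\le r<2$ for all $k$, and choose the representative with $\|p-(0,3)\|\le r$. Then $p_2\ge3-r>1$, so $\|p+(2,1)\|>2$ and $\|p+(-2,1)\|>2$. This forces $\|p-(2,1)\|\le r$ and $\|p-(-2,1)\|\le r$, that is, $2-r\le p_1\le r-2$, which is impossible. Without an explicit configuration of this kind, your proposal does not establish $\lambda\ge2$.
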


\begin{proof}
We use Biermann's configuration~\cite[Section 5.2]{Biermann}.
Fix two indices in $I$ and write the vectors supported on
them as pairs of real numbers. The points $P_1=[(0,3)]$, $P_2=[(2,1)]$, and
$P_3=[(-2,1)]$ of $\Vt$ are at distance $2$ from one another. We claim that
there is no $[p]\in\Vt$ with $\max_k\dt([p],P_k)<2$. Indeed, suppose
$\dt([p],P_k)\le r<2$ for $k=1,2,3$, and choose the representative $p$ so that
$\|p-(0,3)\|\le r$. Then $p_2\ge 3-r>1$, which implies $\|p+(2,1)\|>2$ and
$\|p+(-2,1)\|>2$. Therefore $\|p-(2,1)\|\le r$ and $\|p-(-2,1)\|\le r$, which
yield $2-r\le p_1\le r-2$, a contradiction.

Let $M=\{q_1,q_2,q_3,o\}$ be the metric space with $d(q_k,q_l)=2$ for $k\ne l$
and $d(o,q_k)=1$. The map $q_k\mapsto P_k$ is $1$-Lipschitz. A
$\lambda$-Lipschitz extension of this map to $M$ sends $o$ to a point within
distance $\lambda$ of each $P_k$, which by the above claim requires
$\lambda\ge2$.
By~\eqref{eq:metric}, the map $[b]\mapsto\lb b,-b\rb$ is an isometric
embedding of $\Vt$ into $\Q_2(V)$. Since $T_2(\lb b,-b\rb)=[b]$,
Lemma~\ref{lem:transfer} applied with $\Phi([b])=\lb b,-b\rb$ and $\Psi=T_2$
gives the same lower bound for $\Q_2(V)$.

For $Q\ge3$, put $\zeta=(10,0)$ and let $A_k$ be the unordered antipodal
pair corresponding to $P_k$. Pad each $A_k$ with $Q-2$ copies of $\zeta$
to obtain $B_k\in\Q_Q(V)$. Then $\G(B_k,B_l)=2$ for $k\ne l$:
matching the padding points gives the upper bound, and any matching
of cost less than $2$ would have to preserve the padding and would
contradict $\G(A_k,A_l)=2$.

Suppose a $Q$-tuple $B$ were within distance $r<2$ of all three $B_k$.
A matching to $B_1$ shows that exactly two of its entries have norm
less than $5$, while the other $Q-2$ have norm greater than $8$.
Since every point of every $A_k$ has norm at most $3$, every matching
of cost at most $r$ preserves this separation. Thus the two entries of
norm less than $5$ form a pair within distance $r$ of every $A_k$.
Applying $T_2$, which maps $A_k$ to $P_k$, gives a point of
$\Vt$ within distance $r$ of every $P_k$, a contradiction.
Using $B_k$ as the images of the points $q_k$ proves the lower bound for
every $Q\ge3$.
\end{proof}

The assumption $|I|\ge2$ in Proposition~\ref{prop:lower} cannot be dropped.
Indeed, sorting identifies $\Q_Q(\R)$ isometrically with the cone
$\{x\in\R^Q: x_1\le\dots\le x_Q\}$ in $\ell_\infty^Q$, and sorting is a
$1$-Lipschitz retraction of $\ell_\infty^Q$ onto this cone. Hence $\Q_Q(\R)$ is
an absolute $1$-Lipschitz retract, and so is the antipodal quotient of $\R$,
which is isometric to $[0,\infty)$.

\begin{question}
For $|I|\ge2$ and $Q\ge2$, what is the optimal absolute Lipschitz retract
constant of $\Q_Q(\ell_\infty(I))$? The bounds we have are
$2\le\lambda\le 3Q-2$. Is there an upper bound independent of $Q$? 
\end{question}

\begin{question} For $|I|\ge2$, what is the optimal absolute Lipschitz retract
constant of the antipodal quotient of $\ell_\infty(I)$? The bounds we have are $2\le\lambda\le3$, by 
Proposition~\ref{prop:lower} and Example~\ref{two-color-example}.
\end{question}

For finite subset spaces, the constant must grow with $n$, even when $I$
is a singleton.

\begin{proposition}\label{prop:finite-lower}
Let $I\ne\varnothing$ and $n\ge2$. If $V(n)$ is an absolute
$\lambda$-Lipschitz retract, then $\lambda\ge n-2$. If, in addition,
$|I|\ge2$, then $\lambda\ge2$.
\end{proposition}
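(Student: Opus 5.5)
The plan is to reduce both assertions to explicit finite extension problems, in the spirit of the proof of Proposition~\ref{prop:lower}. By the characterization from Section~\ref{sec:prelim}, it suffices to exhibit, for each asserted bound, a metric space $M$, a subset $A\subset M$ and a $1$-Lipschitz map $f\colon A\to V(n)$ such that every extension has Lipschitz constant at least the bound.

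\emph{Reduction to one or two coordinates.} If $J\subset I$ is nonempty, then restriction to the coordinates in $J$ is a $1$-Lipschitz retraction of $V$ onto a copy of $\ell_\infty(J)$, and the inclusion is isometric. By the remark on induced maps in Section~\ref{sec:prelim}, the same holds for $\ell_\infty(J)(n)\subset V(n)$. Lemma~\ref{lem:transfer}, applied with $\Phi$ the inclusion and $\Psi$ the induced retraction, shows that $\ell_\infty(J)(n)$ is an absolute $\lambda$-Lipschitz retract whenever $V(n)$ is. Hence it suffices to prove $\lambda\ge n-2$ for $\R(n)$, and $\lambda\ge2$ for $\ell_\infty^2(n)$.

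\emph{The bound $\lambda\ge2$ when $|I|\ge2$.} For $n=2$, Lemma~\ref{lem:matching} identifies $V(2)$ isometrically with $\Q_2(V)$, and Proposition~\ref{prop:lower} applies. For $n\ge3$, I would copy the padding argument of Proposition~\ref{prop:lower}, using sets instead of multisets.
\begin{itemize}
\item Pad each antipodal pair $A_k$ with $n-2$ distinct points $\zeta_1,\dots,\zeta_{n-2}$. These points should be far from the origin and at mutual distances much larger than $2$, for instance $\zeta_j=(10j,0)$.
\item The padded sets $B_k$ are then pairwise at Hausdorff distance exactly $2$.
\item Suppose a set $B$ with $|B|\le n$ lies within $r<2$ of all three $B_k$. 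Then $B$ must contain a point near each $\zeta_j$, and these points are distinct. So at most two points of $B$ lie near the origin, and they form a set within $r$ of every $A_k$ in the Hausdorff metric.
\item By Lemma~\ref{lem:matching}, such a set corresponds to an unordered pair within $r$ of every $A_k$ in $\G$. Applying $T_2$ contradicts Biermann's claim.
\end{itemize}
One extra check is needed: the near-origin part of $B$ might be a single point. This corresponds to a repeated pair $\lb b,b\rb$, which is covered by Lemma~\ref{lem:matching}, so no new case arises.

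\emph{The bound $\lambda\ge n-2$ in $\R(n)$.} This is the main obstacle, since I do not yet have a verified configuration. I would look for a three-point (or small) configuration $P_1,P_2,P_3\in\R(n)$ with pairwise Hausdorff distances at most $2$, such that no set of at most $n$ points is within $r<n-2$ of all three. The mechanism should be the one in Remark~\ref{rem:sharp}, where points of a finite set can slide along a chain with spacing $2$. The first thing I would try is sets of the form $\{0,2,\dots,2n-2\}$ together with shifts and truncations. The aim is that a common $r$-neighbor would be forced to contain points near both ends of a chain of length about $2(n-1)$ while matching every point of each $P_k$. I would then show that such a neighbor needs more than $n$ points unless $r$ is comparable to $n-2$.

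If this fails, the fallback is a longer path. I would take $M$ to be a discrete path of points $q_0,\dots,q_N$ at unit spacing, with $f$ prescribed only at the two ends and at a few intermediate points. The extension would then be forced to move a cluster of points across a distance proportional to $n$ while the allowed Hausdorff budget per step is $\lambda$. This is the quantitative form of the discontinuity described at the start of Section~\ref{sec:finite}: merging $n-2$ points onto $a$ versus onto $b$.
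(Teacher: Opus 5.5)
Your reduction to one or two coordinates is fine, and so is your proof of $\lambda\ge2$. The padded sets $B_k$ are pairwise at Hausdorff distance $2$. The points of any common $r$-neighbor $B$ split by norm into points near the $\zeta_j$ and at most two points near the origin. Lemma~\ref{lem:matching} together with $T_2$ then reduces everything to Biermann's claim. The paper argues more directly, without padding. Since $d_H$ ignores cardinality, the unpadded sets $A_k\in V(2)\subset V(n)$ already have no common neighbor within $r<2$: a point $c\in B$ with $\|c-(0,3)\|\le r$ has $c_2>1$. Its $r$-neighbors in $A_2$ and $A_3$ must therefore be $(2,1)$ and $(-2,1)$, which gives both $c_1>0$ and $c_1<0$.

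The first assertion, $\lambda\ge n-2$, is not proved, and your primary plan cannot work. If $P_1,P_2,P_3$ are pairwise within Hausdorff distance $2$, then $P_1$ itself is within $2$ of all three. More generally, a one-point extension of an $L$-Lipschitz map defined on a finite set $A$ never costs more than $2L$: send the new point $o$ to $f(a_0)$ with $a_0\in A$ nearest to $o$, and use $d(a_0,a)\le 2\,d(o,a)$. So the four-point space $M$ can never force a constant above $2$, and the configuration you are looking for does not exist once $n\ge5$.

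Your fallback is not specified. If the domain is metrically a path, it only tests whether prescribed sets can be joined by chains with small Hausdorff steps, and nothing in your proposal shows that $\R(n)$ is badly chain-connected. The merging phenomenon at the start of Section~\ref{sec:finite} concerns right inverses of $\Q_n(X)\to X(n)$. Inside $X(n)$ it becomes an obstruction only for a map defined on $(n+1)$-point sets that must output at most $n$ points.

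That is the missing idea. Since $V(n)\subset V(n+1)$ isometrically, the absolute retract property gives a $\lambda$-Lipschitz retraction $V(n+1)\to V(n)$. The theorem of Akofor and Kovalev~\cite[Theorem 1.3]{AkoforKovalev} shows that such a retraction, for a normed space $V$ of dimension at least $1$, has Lipschitz constant at least $(n+1)-3=n-2$. This is exactly the paper's proof of the first assertion. Without that input, or a self-contained linear lower bound for retractions $V(n+1)\to V(n)$, the first half of the statement remains unproved in your proposal.
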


\begin{proof}
Since $V(n)\subset V(n+1)$ isometrically, there is a $\lambda$-Lipschitz
retraction of $V(n+1)$ onto $V(n)$. As $V$ is a normed space of dimension at
least $1$, \cite[Theorem 1.3]{AkoforKovalev} gives $\lambda\ge(n+1)-3$.

Now let $|I|\ge2$, and write vectors supported on two fixed indices as pairs
of real numbers, as in the proof of Proposition~\ref{prop:lower}. The sets
\[
A_1=\{(0,3),(0,-3)\},\qquad A_2=\{(2,1),(-2,-1)\},\qquad
A_3=\{(-2,1),(2,-1)\}
\]
lie in $V(2)\subset V(n)$. By Lemma~\ref{lem:matching} and the isometric
embedding $[b]\mapsto\lb b,-b\rb$ of $\Vt$ into $\Q_2(V)$, we have
$d_H(A_k,A_l)=\dt(P_k,P_l)=2$ for $k\ne l$. Suppose that $B\in V(n)$ satisfies
$d_H(B,A_k)\le r<2$ for $k=1,2,3$. Some $c\in B$ satisfies
$\|c-(0,3)\|\le r$; writing $c_1,c_2$ for its two coordinates, we have
$c_2\ge3-r>1$. The point of $A_2$ within distance $r$ of $c$ must therefore
be $(2,1)$, which gives $c_1>0$, and the point of $A_3$ within distance $r$
of $c$ must be $(-2,1)$, which gives $c_1<0$. This contradiction shows that
no point of $V(n)$ is within distance less than $2$ of all three sets $A_k$.
Extending the map $q_k\mapsto A_k$ from the four-point space $M$ in the proof
of Proposition~\ref{prop:lower} now gives $\lambda\ge2$.
\end{proof}

The first bound in Proposition~\ref{prop:finite-lower} applies to $\R(n)$,
whereas $\Q_Q(\R)$ is an absolute $1$-Lipschitz retract for every $Q$. Thus
the constants for finite subset spaces must grow linearly in $n$ already on
the line, while for symmetric products no growth in $Q$ is known to be
necessary.

\begin{question}
For $|I|\ge2$ and $n\ge3$, what is the optimal absolute Lipschitz retract
constant of $V(n)$? By Theorem~\ref{thm:finite} and
Proposition~\ref{prop:finite-lower}, it lies between $\max(2,n-2)$ and
$6n-5$.
\end{question}

\section*{AI use disclosure}

The author acknowledges the use of AI tools, specifically ChatGPT by OpenAI and Claude by Anthropic. The text was edited and all mathematical arguments were checked by the author. 

\bibliographystyle{plain}
\bibliography{references}

\begin{thebibliography}{10}

\bibitem{AkoforKovalev}
Earnest Akofor and Leonid~V. Kovalev.
\newblock Growth rate of {L}ipschitz constants for retractions between finite subset spaces.
\newblock {\em Studia Math.}, 260(3):317--326, 2021.

\bibitem{Almgren}
Frederick~J. Almgren, Jr.
\newblock {\em Almgren's big regularity paper}, volume~1 of {\em World Scientific Monograph Series in Mathematics}.
\newblock World Scientific Publishing Co., Inc., River Edge, NJ, 2000.

\bibitem{AronszajnPanitchpakdi}
N.~Aronszajn and P.~Panitchpakdi.
\newblock Extension of uniformly continuous transformations and hyperconvex metric spaces.
\newblock {\em Pacific J. Math.}, 6:405--439, 1956.

\bibitem{BenyaminiLindenstrauss}
Yoav Benyamini and Joram Lindenstrauss.
\newblock {\em Geometric nonlinear functional analysis. {V}ol. 1}, volume~48 of {\em American Mathematical Society Colloquium Publications}.
\newblock American Mathematical Society, Providence, RI, 2000.

\bibitem{Biermann}
Patrick Biermann.
\newblock {\em Lipschitz {G}eometry of {B}anach and {M}etric {S}paces}.
\newblock ProQuest LLC, Ann Arbor, MI, 2016.
\newblock \url{https://surface.syr.edu/etd/421}.

\bibitem{BorsukUlam}
Karol Borsuk and Stanislaw Ulam.
\newblock On symmetric products of topological spaces.
\newblock {\em Bull. Amer. Math. Soc.}, 37(12):875--882, 1931.

\bibitem{BouafiaDePauwGoblet}
Philippe Bouafia, Thierry De~Pauw, and Jordan Goblet.
\newblock Existence of {$p$} harmonic multiple valued maps into a separable {H}ilbert space.
\newblock {\em Ann. Inst. Fourier (Grenoble)}, 65(2):763--833, 2015.

\bibitem{AIMPL}
Luca Capogna, Jeremy Tyson, and Stefan Wenger, editors.
\newblock {\em {AimPL}: {M}apping theory in metric spaces}.
\newblock American Institute of Mathematics, Palo Alto, 2012.
\newblock \url{http://aimpl.org/mappingmetric}.

\bibitem{DeLellisSpadaro}
Camillo De~Lellis and Emanuele~Nunzio Spadaro.
\newblock {$Q$}-valued functions revisited.
\newblock {\em Mem. Amer. Math. Soc.}, 211(991):vi+79, 2011.

\bibitem{Goblet}
Jordan Goblet.
\newblock Lipschitz extension of multiple {B}anach-valued functions in the sense of {A}lmgren.
\newblock {\em Houston J. Math.}, 35(1):223--231, 2009.

\bibitem{Kalton}
N.~J. Kalton.
\newblock Extending {L}ipschitz maps into {$C(K)$}-spaces.
\newblock {\em Israel J. Math.}, 162:275--315, 2007.

\bibitem{Kovalev2015}
Leonid~V. Kovalev.
\newblock Symmetric products of the line: embeddings and retractions.
\newblock {\em Proc. Amer. Math. Soc.}, 143(2):801--809, 2015.

\bibitem{Kovalev2016}
Leonid~V. Kovalev.
\newblock Lipschitz retraction of finite subsets of {H}ilbert spaces.
\newblock {\em Bull. Aust. Math. Soc.}, 93(1):146--151, 2016.

\bibitem{LangSchlichenmaier}
Urs Lang and Thilo Schlichenmaier.
\newblock Nagata dimension, quasisymmetric embeddings, and {L}ipschitz extensions.
\newblock {\em Int. Math. Res. Not.}, 58:3625--3655, 2005.

\bibitem{Lindenstrauss}
Joram Lindenstrauss.
\newblock On nonlinear projections in {B}anach spaces.
\newblock {\em Michigan Math. J.}, 11:263--287, 1964.

\bibitem{Tuffley}
Christopher Tuffley.
\newblock Finite subset spaces of {$S^1$}.
\newblock {\em Algebr. Geom. Topol.}, 2:1119--1145, 2002.

\end{thebibliography}

\end{document}